\newtheorem{theorem}{Theorem}[section]
\newtheorem{lemma}[theorem]{Lemma}
\newtheorem{corollary}[theorem]{Corollary}
\newtheorem{definition}[theorem]{Definition}
\newtheorem{fact}[theorem]{Fact}
\newtheorem{question}{Question}
\newtheorem*{theorem*}{Theorem}
\newtheorem*{claim}{Claim}
\newcommand{\forceP}{\mathbb{P}}
\newcommand{\forceQ}{\mathbb{Q}}
\newcommand{\forceR}{\mathbb{R}}
\newcommand{\ZFC}{\mathsf{ZFC}}
\newcommand{\ZFP}{\mathsf{ZF}^-}
\newcommand{\CH}{\mathsf{CH}}
\newcommand{\PD}{\mathsf{PD}}
\providecommand*{\capdot}{%
  \mathbin{%
    \mathpalette\@capdot{}%
  }%
}
\newcommand*{\@capdot}[2]{%
  \ooalign{%
    $\m@th#1\cap$\cr
    \hidewidth$\m@th#1\cdot$\hidewidth
  }%
}
\def\undertilde#1{\mathord{\vtop{\ialign{##\crcr
$\hfil\displaystyle{#1}\hfil$\crcr\noalign{\kern1.5pt\nointerlineskip}
$\hfil\tilde{}\hfil$\crcr\noalign{\kern1.5pt}}}}}
\title{Forcing the ${\Sigma^1_3}$-Separation Property}
\author{ Stefan Hoffelner\footnote{ WWU M\"unster. Research funded by thr Deutsche Forschungsgemsinschaft (DFG German Research Foundation) under Germanys Excellence Strategy EXC 2044 390685587, Mathematics M\"unster: Dynamics-Geometry-Structure. The author was additionally partially supported by FWF-GACR grant no. 17-33849L, Filters, ultrafilters and connections with forcing. He thanks S. D. Friedman for several discussions on a related topic. He thanks A. Lietz for discussions and several improvements, and F. Schlutzenberg, R. Schindler and L. Wu for more discussions. }}
\date{21.07.2022}
\begin{document}

\maketitle

\begin{abstract}
We generically construct a model in which the $\bf{\Sigma^1_3}$-separation property is true, i.e. every pair of disjoint $\bf{\Sigma^1_3}$-sets can be separated by a $\bf{\Delta^1_3}$-definable set. This answers an old question from the problem list $``$Surrealist landscape with figures$"$  by A. Mathias from 1968. We also construct a model in which the (lightface) $\Sigma^1_3$-separation property is true. 
\end{abstract}

\section{Introduction}

The separation property, together with the reduction property and the uniformization property, are three classical notions which were introduced and studied first by Polish and Russian descriptive set theorists in the 1920's and 1930's.

\begin{definition} Let $\Gamma$ be a (lightface or boldface) projective pointclass, and let $\check{\Gamma}=\{X \, : \, \omega^{\omega} \backslash X \in \Gamma \}$ denote the dual pointclass of $\Gamma$.
\begin{itemize}

\item  We say that  $\Gamma$ has the separation 
 property iff every pair $A_1$ and $A_2$ of disjoint elements of $\Gamma$ has a separating set $C  \in \Gamma \cap \check{\Gamma}$, where $C$ separates $A_1$ and $A_2$ if $A_1 \subset C$ and $A_2 \subset \omega^{\omega} \backslash C$.
\item  $\Gamma$ has the reduction property if for any pair $A_1$ and $A_2$ in $\Gamma$, there are disjoint sets $B_1 \subset A_1$ and $B_2 \subset A_2$ both in $\Gamma$ such that $A_1 \cup A_2= B_1 \cup B_2$.
\item $\Gamma$ has the uniformization property if  for every $A \subset \omega^{\omega} \times \omega^{\omega}$ there is a uniformizing function $f_A$ whose graph is in $\Gamma$, where we say that $f_A$ is a uniformizing function of $A$ if
$dom f_A = pr_1(A)=\{ x \in \omega^{\omega} \, : \, \exists y ((x,y) \in A) \}$ and $f_A \subset A$.
\end{itemize}

\end{definition}

It is rather straightforward to see that the uniformization property for $\Gamma$ implies the reduction property for $\Gamma$. A classical result due to Novikov shows that the reduction property can not hold simultaneously at both $\Gamma$ and $\check{\Gamma}$. Passing to complements immediately yields that the reduction property for $\Gamma$ implies that the dual $\check{\Gamma}$ has the separation property (see e.g. Y. Moschovakis book \cite{Moschovakis} for many more information on the orgin and history of these notions). Consequentially, $\bf{\Sigma^1_1}$ and $\bf{\Pi^1_2}$-sets have the separation property due to M. Kondo's theorem that $\bf{\Pi^1_1}$, hence also $\bf{\Sigma^1_2}$ has the uniformization property. The fact that the $\bf{\Sigma^1_1}$-separation property is true has been proved by N. Lusin already in 1927.  This is as much as $\ZFC$ can prove about the separation property. 

In G\"odel's constructible universe $L$ there is a good $\Sigma^1_2$-definable wellorder of the reals, hence the $\bf{\Sigma^1_n}$-uniformization property holds for $n\ge 3$, so $\bf{\Pi^1_n}$-separation must hold as well. On the other hand, by the celebrated results of Y. Moschovakis $\bf{\Delta}^1_{2n}$-determinacy implies the $\bf{\Pi}^1_{2n+1}$-uniformization property, so in particular under $\bf{\Delta}^1_2$-determinacy $\bf{\Sigma^1_3}$-separation holds.
Note here, that due to H. Woodin, ${\Delta^1_2}$-determinacy together with $\bf{\Pi}^1_1$-determinacy already implies that $M_1^{\#}$ exists and is $\omega_1$-iterable (see \cite{MSW}, Theorem 1.22) which in turn implies the existence of an inner model with a Woodin cardinal.

On the other hand, in the presence of $``$every reals has a sharp$"$, if the $\bf{\Sigma}^1_3$-separation property holds, then it does so because already $\bf{\Delta}^1_2$-determinacy holds. The above follows from Steel's and Woodin's solution to the fourth Delfino problem.  Steel, showed that in the presence of $``$every reals has a sharp$"$, the $\bf{\Sigma}^1_3$-separation property implies the existence of an inner model with a Woodin cardinal as well (see \cite{Steel}, Theorem 0.7), more precisely, under the stated assumptions, for any real $y$, there is a proper class model $M$ with $y \in M$, and an ordinal $\delta$ such that $V^M_{\delta+1}$ is countable and $\delta$ is a Woodin cardinal in $M$. Now results of Woodin which were later reproved by I. Neeman using different methods (see \cite{Neeman}, Corollary 2.3) the latter assertion implies that $\bf{\Delta}^1_2$-determinacy must hold in $V$.

It is natural to ask whether one can get a model of the $\bf{\Sigma^1_3}$-separation property from just assuming the consistency of $\ZFC$. Indeed, this question has been asked long before the connection of determinacy assumptions and large cardinals has been uncovered; it appears as Problem 3029 in A. Mathias's list of open problems compiled in 1968 (see \cite{Mathias},  or \cite{Kanovei1}, where the problem is stated again).
The problem itself seems to have a nontrivial history of attempted solutions (see \cite{Kanovei2} for an account).

Put in wider context, this paper can be seen as following a tradition of establishing consequences from (local forms of) projective determinacy using the methods of forcing. There is an extensive list of results which deal with forcing statements concerning the Lebesgue measurability and the Baire property of certain levels of the projective hierarchy. For the separation property, L. Harrington, in unpublished notes dating back to 1974, constructed a model in which the separation property fails for both $\bf{\Sigma}^1_3$ and $\bf{\Pi}^1_3$-sets. In the same set of handwritten notes, he outlines how his proof can be altered to work
for arbitrary $n \ge 3$. Very recently, using different methods, V. Kanovei and V. Lyubetsky devised a forcing which, given an arbitrary $n \ge 3$, produces a universe in which the $\bf{\Sigma}^1_n$- and  the $\bf{\Pi}^1_n$-separation property fails (see \cite{Kanovei1}).

Yet tools for producing models which deal with the separation property, the reduction property or the uniformization property in a positive way were non-existent. Goal of this paper is to show that the $\bf{\Sigma^1_3}$-separation property has no large cardinal strength, which answers Mathias question.

\begin{theorem*}
Starting with $L$ as the ground model, one can produce a set-generic extension $L[G]$ in which the $\bf{\Sigma^1_3}$-separation property holds.
\end{theorem*}
The proof method also allows to tackle the $\Sigma^1_3$-separation property:

\begin{theorem*}
Starting with $L$ as the ground model, one can produce a set-generic extension $L[G]$ in which the ${\Sigma^1_3}$-separation property holds.
\end{theorem*}

As always, the flexibility of the forcing method can be exploited to produce effects which can not be inferred from projective determinacy assumptions alone. An example would be that the above proofs lift without pain to statements about the $\Sigma^1_1$-separation property in the generalized Baire space $\omega_1^{\omega_1}$. Another example, though speculative, involves lifting the above results to inner models with finitely many Woodin cardinals. We strongly believe that the proofs of the theorems above can serve as a blueprint to obtain models where the $\Sigma^1_{n+3}$-separation property holds, for arbitrary $n \in \omega$, while working over the canonical inner model with $n$ Woodin cardinals $M_n$ instead of $L$, as we do in this article. Note here, that for even $n$ this would produce models, which display a behaviour of the separation property which contradicts the one implied by $\PD$.

We finish the introduction with a short summary of the present article. In section two we briefly introduce the forcings which will be used in order to prove the two main theorems. In the third section we shall construct a mild generic extension of $L$ denoted with $W$ which is for our needs the right ground model to work with. In the third subsection of section three, we prove an auxiliary result whose purpose is to highlight several important ideas in an easier setting. Our hopes are that this way, the reader obtains a better understanding of the proofs of the later main theorems.
In section four we prove the boldface separation property and in the fifth section we prove the lightface separation property. The latter relies on several arguments from the boldface case and can not be read separately. In the sixth section we discuss some interesting and open questions.

\section{Preliminaries}

\subsection{Notation}
The notation we use will be mostly standard, we hope. We write $\forceP=(\forceP_{\alpha} \, : \, \alpha < \gamma)$ for a forcing iteration of length $\gamma$ with initial segments $\forceP_{\alpha}$. The $\alpha$-th factor of the iteration will be denoted with $\forceP(\alpha)$. Note here that we drop the dot on $\forceP(\alpha)$, even though $\forceP(\alpha)$ is in fact a $\forceP_{\alpha}$-name of a partial order. 
If $\alpha' < \alpha < \gamma$, then we write $\forceP_{\alpha' \alpha}$ to denote the intermediate forcing of $\forceP$ which happens in the interval $[\alpha',\alpha)$, i.e. $\forceP_{\alpha' \alpha}$ is such that 
$\forceP \cong \forceP_{\alpha'} \ast \forceP_{\alpha' \alpha}$.

We write $\forceP \Vdash \varphi$ whenever every condition in $\forceP$ forces $\varphi$, and make deliberate use of restricting partial orders below conditions, that is, if $p \in \forceP $ is such that $p \Vdash \varphi$, we let $\forceP':= \forceP_{\le p}:=\{ q \in \forceP \, : \, q \le p\}$ and use $\forceP'$ instead of $\forceP$. This is supposed to reduce the notational load of some definitions and arguments.
We also sometimes write $V[\forceP]\models \varphi$ to indicate that for every $\forceP$-generic filter $G$ over $V$, $V[G] \models \varphi$.

\subsection{The forcings which are used}
The forcings which we will use in the construction are all well-known. We nevertheless briefly introduce them and their main properties. 

\begin{definition}
 For a stationary $S \subset \omega_1$ the club-shooting forcing with finite conditions for $S$, denoted by $\forceP_S$ consists
 of conditions $p$ which are finite partial functions from $\omega_1$ to $S$ and for which there exists a normal function $f: \omega_1 \rightarrow \omega_1$ such that $p \subset f$. $\forceP_S$ is ordered by end-extension.
 \end{definition}
The club shooting forcing $\forceP_S$ is the paradigmatic example for an $S$-\emph{proper forcing}, where we say that $\forceP$ is $S$-proper if and only if for every condition $p \in \forceP_S$, every sufficiently large $\theta$ and every countable $M \prec H(\theta)$ such that $M \cap \omega_1 \in S$ and $p, \forceP_S \in M$, there is a $q<p$ which is $(M, \forceP_S)$-generic. 
\begin{lemma}
 The club-shooting forcing $\forceP_S$ generically adds a club through the stationary set $S \subset \omega_1$, while being $S$-proper and
 hence $\omega_1$-preserving. Moreover stationary subsets $T$ of $S$ remain stationary in the generic extension. 
\end{lemma}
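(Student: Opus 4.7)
I proceed by establishing the three assertions of the lemma in turn, starting with $S$-properness (which yields $\omega_1$-preservation) and then using it to obtain both the generic club and the stationarity preservation. Given a countable $M \prec H(\theta)$ with $\delta := M \cap \omega_1 \in S$ and $p \in M \cap \forceP_S$, the candidate master condition is
$$q := p \cup \{(\delta,\delta)\}.$$
To see $q \in \forceP_S$: since $p \in M$, both $\max\operatorname{dom}(p)$ and $\max\operatorname{ran}(p)$ lie below $\delta$; taking a normal $f$ with $p \subset f$, I modify $f$ on the interval $(\max\operatorname{dom}(p),\delta]$ by interpolating a strictly increasing continuous function reaching $\delta$ at $\delta$, and extend above $\delta$ normally, obtaining a normal $f'$ with $q \subset f'$. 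To show $q$ is $(M,\forceP_S)$-generic, I exploit the end-extension ordering: any $r \le q$ satisfies $r \restriction \delta = p$, since additional entries have domain above $\max\operatorname{dom}(q) = \delta$. Given any dense $D \in M$, elementarity delivers $p' \in D \cap M$ with $p' \le p$; since $\operatorname{dom}(p'),\operatorname{ran}(p') \subset \delta$ while the extra entries in $r \setminus q$ have domain above $\delta$, the union $p' \cup r$ is a well-defined function, and it extends to a normal function by gluing a normal extension of $p'$ on $[0,\delta]$ (with value $\delta$ at $\delta$) with a normal extension of $r$ on $[\delta,\omega_1)$ at their common value $\delta$. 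Hence $D \cap M$ is predense below $q$.

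For the generic club, let $F := \bigcup G$. Standard density arguments give $\operatorname{ran}(F) \subseteq S$ unbounded in $\omega_1$. Combined with the master-condition construction, for densely many countable $M \prec H(\theta)$ with $\delta := M \cap \omega_1 \in S$, the forcing realizes $F(\delta) = \delta$, producing a rich supply of $\delta \in S$ with $F(\delta) = \delta$. A closure-and-unboundedness argument using $S$-properness then shows that in $V[G]$ these fixed points contain a club $C \subseteq S$, the desired club through $S$.

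Preservation of stationary $T \subseteq S$ is the standard consequence of $S$-properness. Given a name $\dot C$ for a club in $\omega_1$ and $p_0$ forcing it, I pick a countable $M \prec H(\theta)$ with $p_0,\dot C,T \in M$ and $\delta := M \cap \omega_1 \in T$. By $S$-properness, there is an $(M,\forceP_S)$-generic $q \le p_0$. Every dense set in $M$ witnessing unboundedness of $\dot C$ is met below $q$, forcing $\dot C \cap \delta$ unbounded in $\delta$; closure of $\dot C$ yields $\delta \in \dot C$, so $q \Vdash \delta \in T \cap \dot C$. The main obstacle is the compatibility verification in the $S$-properness step, specifically the gluing of normal function extensions through $\delta$; once in place, the rest of the proof follows standard patterns for $S$-proper forcings.
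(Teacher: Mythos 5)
The paper states this lemma without proof, treating it as a standard fact about the club--shooting poset, so there is no internal argument to compare against; judged on its own terms, your proposal follows the standard route (master condition $q=p\cup\{(\delta,\delta)\}$, then $\omega_1$-preservation and preservation of stationary $T\subseteq S$ as formal consequences of $S$-properness, the last of which you carry out correctly), but two steps have genuine gaps. The first is the compatibility verification. You assert that every $r\le q$ satisfies $r\restriction\delta=p$, which is only true if the order is literally end-extension; but under that order your candidate $p'\cup r$ is \emph{not} an extension of $r$, since it inserts pairs below $\max\operatorname{dom}(r)$, so it does not witness compatibility of $p'$ with $r$. (In fact, under the literal end-extension order a generic filter is a chain of finite end-extensions, so $\bigcup G$ has domain of order type $\omega$ cofinal in $\omega_1$ and the poset collapses $\omega_1$; the ordering has to be read as reverse inclusion.) Under reverse inclusion, $r\le q$ may add pairs below $\delta$, so $r\restriction\delta$ can properly extend $p$ and your $p'$, chosen below $p$ only, need not be compatible with $r$. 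The repair is standard: $r\restriction\delta$ is a finite subset of $M$, hence an element of $M$, and is itself a condition (all its values lie below $\delta$ because $r(\delta)=\delta$ and $r$ embeds into an increasing function); extend \emph{it} inside $M$ to $p'\in D\cap M$ and then glue exactly as you describe, using the common value $\delta$ at $\delta$ as the seam.

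The second gap is the heart of the "adds a club" claim. The set $\{\delta: F(\delta)=\delta\}$ is \emph{not} closed: a limit $\delta$ of fixed points with $\delta\notin S$ cannot itself be a fixed point, since $F$ takes values in $S$, and nothing you have said prevents such $\delta$ from arising -- indeed, below a condition containing the pair $(\delta,\min(S\setminus\delta))$ with $\delta\in\operatorname{acc}(S)\setminus S$ it remains dense to insert further fixed points cofinally below $\delta$. So the "closure-and-unboundedness argument" you invoke is precisely the nontrivial content of the lemma and cannot be the naive one. What is needed is an explicit density argument showing that for the relevant limit ordinals $\delta$ one can either realize $F(\delta)=\delta$ or "seal $\delta$ off" by placing some value $\ge\delta$ at an ordinal below $\delta$ (so that $\delta$ is not an accumulation point of fixed points), together with an argument that the residual bad set is nonstationary in $V[G]$; alternatively one must pass to a different derived club. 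As written, the first assertion of the lemma is asserted rather than proved, while the properness and preservation parts are essentially right modulo the compatibility fix above.
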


We will choose a family of $S_{\beta}$'s so that we can shoot an arbitrary pattern of clubs through its elements such that this pattern can be read off
from the stationarity of the $S_{\beta}$'s in the generic extension.
For that it is crucial to recall that $S$-proper posets can be iterated with countable support and always yield an $S$-proper forcing again. This is proved exactly as in the well-known case for plain proper forcings (see \cite{Goldstern}, 3.19. for a proof).
\begin{fact}
Let $(\forceP_{\alpha} \,:\, \alpha< \gamma)$ be a countable support iteration, assume also that at every stage $\alpha$, $\forceP_{\alpha} \Vdash_{\alpha} \forceP(\alpha)$ is $S$-proper. Then the iteration is an $S$-proper notion of forcing again.
\end{fact}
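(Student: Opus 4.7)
The plan is to follow Shelah's classical proof of the countable-support iteration theorem for properness, relativising every appeal to a countable elementary submodel to those $M \prec H(\theta)$ with $M \cap \omega_1 \in S$. I would proceed by induction on the length $\alpha$ of the iteration, proving the strengthened inductive hypothesis: for all $\beta \le \alpha$, every $p \in \forceP_\alpha$, and every countable $M \prec H(\theta)$ with $\forceP_\alpha, p, \beta \in M$ and $M \cap \omega_1 \in S$, whenever $p \upharpoonright \beta$ is already $(M, \forceP_\beta)$-generic there is an $(M, \forceP_\alpha)$-generic $q \le p$ with $q \upharpoonright \beta = p \upharpoonright \beta$. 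The successor case is essentially automatic: given an $(M, \forceP_\beta)$-generic $q \le p \upharpoonright \beta$, $q$ forces $M[\dot G_\beta] \cap \omega_1 = M \cap \omega_1 \in S$, so by the assumed $S$-properness of $\dot{\forceQ}_\beta$ an $(M[\dot G_\beta], \dot{\forceQ}_\beta)$-generic name $\dot r$ extending $p(\beta)$ exists, and $(q, \dot r)$ is the required condition. For a limit $\alpha$ of uncountable cofinality, the inductive hypothesis applied at the stage $\sup(M \cap \alpha) < \alpha$ already suffices, since every dense subset of $\forceP_\alpha$ lying in $M$ is pre-determined below this supremum.

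The substantive step is the $\omega$-cofinal limit. I fix an increasing sequence $\langle \alpha_n : n < \omega \rangle \subset M \cap \alpha$ cofinal in $\sup(M \cap \alpha)$ with $\alpha_0 = \beta$, and enumerate the dense open $D \subseteq \forceP_\alpha$ lying in $M$ as $\langle D_n : n < \omega \rangle$. Starting from $p_0 := p$, I build inductively a descending sequence $p_0 \ge p_1 \ge \cdots$ with $p_n \in \forceP_\alpha \cap M$ and $p_{n+1} \upharpoonright \alpha_n$ an $(M, \forceP_{\alpha_n})$-generic extension of $p_n \upharpoonright \alpha_n$ meeting $D_n$; the inductive hypothesis on $\forceP_{\alpha_n}$ is invoked at this point, and it is essential that $M \cap \omega_1 \in S$ so that the hypothesis applies. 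The master condition $q \in \forceP_\alpha$ is then defined coordinatewise by $q \upharpoonright \alpha_n = p_{n+1} \upharpoonright \alpha_n$, and its support, being a countable union of countable supports, is again countable as demanded.

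The main obstacle is precisely the fusion above: one must verify simultaneously that the coordinatewise limit truly lies in $\forceP_\alpha$, which is where countable support is essential, and that the result is genuinely $(M, \forceP_\alpha)$-generic, which boils down to meeting every $D_n$. The latter holds because $q \le p_{n+1}$ and $p_{n+1}$ is already below a maximal antichain of $D_n$-conditions, so genericity below $\alpha_n$ lifts to genericity for $D_n$. The only point at which the stationary set $S$ enters the argument is the appeal to the inductive hypothesis on the tails $\forceP_{\alpha_n}$, where one uses that $M \cap \omega_1 \in S$; no additional stationarity-preservation argument beyond Lemma 2.1 is required, since $M$ is held fixed throughout the construction and one never needs to reflect into a different submodel.
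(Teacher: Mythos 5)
The paper does not actually prove this Fact; it only remarks that the argument is identical to the standard countable-support iteration theorem for proper forcing and cites Goldstern 3.19. Your proposal supplies exactly that argument, relativised to models $M$ with $M \cap \omega_1 \in S$, and the two points where $S$ enters (the successor step, via $M[\dot G_\beta] \cap \omega_1 = M \cap \omega_1 \in S$, and the repeated appeals to the inductive hypothesis along the fusion) are identified correctly; so in substance this is the intended proof. One bookkeeping slip should be repaired: as written you require $p_{n+1} \in \forceP_\alpha \cap M$ \emph{and} that $p_{n+1} \upharpoonright \alpha_n$ be $(M, \forceP_{\alpha_n})$-generic, which is impossible for a nontrivial iteration since no condition lying in $M$ can be $(M,\cdot)$-generic. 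The standard fix is to keep two sequences: $\forceP_{\alpha_n}$-names $\dot p_n$ for a decreasing sequence of conditions of $\forceP_\alpha$ lying in $M[\dot G_{\alpha_n}]$ and meeting the $D_n$, together with a separate sequence of master conditions $q_n \in \forceP_{\alpha_n}$ with $q_{n+1} \upharpoonright \alpha_n = q_n$, each $q_n$ being $(M,\forceP_{\alpha_n})$-generic and forcing $\dot p_n \upharpoonright \alpha_n \in \dot G_{\alpha_n}$; the union of the $q_n$ padded with the tails of the $\dot p_n$ is the desired condition. The same fusion (along an $\omega$-sequence cofinal in $M \cap \alpha$) also handles limits of uncountable cofinality, which is cleaner than invoking the inductive hypothesis at $\sup(M \cap \alpha)$, an ordinal that need not belong to $M$.
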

Once we decide to shoot a club through a stationary, co-stationary subset of $\omega_1$, this club will belong to all $\omega_1$-preserving outer models. This hands us a robust method of coding arbitrary information into a suitably chosen sequence of sets. Let $(S_{ \alpha} \, : \, \alpha < \omega_1)$ be a sequence of stationary, co-stationary subsets of $\omega_1$ such that $\forall \alpha, \, \beta < \omega_1 (S_{\alpha} \cap S_{\beta} \in \hbox{NS}_{\omega})$, and let $S\subset \omega_1$ be stationary and such that $S \cap S_{\alpha} \in \hbox{NS}_{\omega})$. Note that we can always assume that these objects exist.
The following coding method has been used several times already (see \cite{SyVera}).
\begin{lemma}
 Let $r \in 2^{\omega_1}$ be arbitrary, and let $\forceP$ be a countable support iteration $(\forceP_{\alpha} \, : \, \alpha < \omega_1)$, inductively defined via \[\forceP(\alpha) := \forceP_{\omega_1 \backslash S_{2 \cdot \alpha}} \text{ if } r(\alpha)=1 \] and
 \[\forceP({\alpha}) := \forceP_{\omega_1 \backslash S_{(2 \cdot\alpha) +1}} \text{ if } r(\alpha)=0.\]
 Then in the resulting generic extension $V[\forceP]$, we have that $\forall \alpha < \omega_1:$ \[ r(\alpha)=1 \text{ if and only if }
 S_{2 \cdot \alpha} \text{  is nonstationary, }\]  and \[ r_{\alpha}=0 \text{ iff } S_{(2 \cdot \alpha)+1} \text{ is nonstationary.} \]
\end{lemma}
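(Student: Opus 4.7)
The plan is to establish the two directions of each biconditional separately and uniformly in $\alpha$. For the ``easy'' direction, note that by construction $\dot{\forceQ}_\alpha$ is exactly the forcing that shoots a club through $\omega_1 \setminus S_{2\alpha}$ (if $r(\alpha) = 1$) or through $\omega_1 \setminus S_{2\alpha+1}$ (if $r(\alpha) = 0$). The club appears in $V^{\forceP_{\alpha+1}}$ and, since later stages of the iteration are $\omega_1$-preserving (being $S$-proper) and do not add new countable subsets to the already-existing club, it remains a club in the final model $V^{\forceP}$; this club witnesses the nonstationarity of the corresponding $S_\beta$ in $V^{\forceP}$. Hence $r(\alpha)=1$ implies $S_{2\alpha}$ is nonstationary in $V^{\forceP}$ and $r(\alpha)=0$ implies $S_{2\alpha+1}$ is nonstationary there.

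For the converse direction I must show that the ``unattacked'' member of each pair $\{S_{2\alpha}, S_{2\alpha+1}\}$ stays stationary. Implicit in the setup (and standard in this coding machinery) is that $\{S_\beta : \beta < \omega_1\}$ is a pairwise disjoint family of stationary subsets of $\omega_1$. Fix $\alpha$ with $r(\alpha) = 1$ and set $T := S_{2\alpha+1}$. Every factor $\dot{\forceQ}_\beta$ of the iteration has the form $\forceP_{\omega_1 \setminus S_\gamma}$ for some $\gamma \neq 2\alpha + 1$, so disjointness of the $S_\delta$'s gives $T \subseteq \omega_1 \setminus S_\gamma$. Any countable $M \prec H(\theta)$ with $M \cap \omega_1 \in T$ therefore automatically satisfies $M \cap \omega_1 \in \omega_1 \setminus S_\gamma$, and the standard argument that $\forceP_{\omega_1 \setminus S_\gamma}$ is $(\omega_1 \setminus S_\gamma)$-proper produces an $(M, \forceP_{\omega_1 \setminus S_\gamma})$-generic condition below any given one. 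Thus each $\dot{\forceQ}_\beta$ is in fact $T$-proper.

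Applying the iteration Fact cited above with $S = T$, the whole iteration $\forceP$ is $T$-proper. This directly yields that $T$ remains stationary in $V^{\forceP}$: given any $\forceP$-name $\dot C$ for a club in $\omega_1$ and any condition $p$, choose $M \prec H(\theta)$ countable with $M \cap \omega_1 \in T$ and $p, \forceP, \dot C \in M$, then pass to an $(M, \forceP)$-generic $q \leq p$; standard elementarity forces $M \cap \omega_1 \in \dot C \cap T$. The symmetric argument with $T = S_{2\alpha}$ handles the case $r(\alpha) = 0$.

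The only real obstacle is the observation that the club-shooting forcing $\forceP_{\omega_1 \setminus S_\gamma}$ is $T$-proper for every stationary $T$ disjoint from $S_\gamma$, not merely for $T = \omega_1 \setminus S_\gamma$ itself; this is what lets a single iteration simultaneously preserve stationarity of all the un-attacked $S_\delta$'s. Once that point is granted, the $S$-proper iteration theorem does all the remaining work, and the biconditionals in the lemma fall out at once.
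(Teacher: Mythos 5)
Your proof is correct and follows essentially the same route as the paper's: the forward direction is immediate from the club that is shot at stage $\alpha$, and the converse is obtained by observing that every other factor is $T$-proper for the unattacked set $T$ (via disjointness of the family, so $T \subseteq \omega_1 \setminus S_\gamma$) and then invoking preservation of $S$-properness under countable support iteration. The paper's proof is simply a terser version of exactly this argument, and the point you flag as the ``only real obstacle'' is the same observation the paper relies on implicitly.
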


\begin{proof}
Note first that the iteration will be $S$-proper, hence $\omega_1$-preserving. Assume  that $r(\alpha)=1$ in $V[{\forceP}]$. Then by definition of the iteration
 we must have shot a club through the complement of $S_{\alpha}$, thus it is nonstationary
 in $V[{\forceP}]$. 
 
 On the other hand, if $S_{2 \cdot \alpha}$ is nonstationary in $V[{\forceP}]$, then
 as for $\beta \ne 2 \cdot \alpha$, every forcing of the form $\forceP_{S_{\beta}}$ is $S_{2 \cdot \alpha}$-proper,
 we can iterate with countable support and preserve $S_{2 \cdot \alpha}$-properness, thus the stationarity of $S_{2 \cdot \alpha}$.
 So if $S_{2 \cdot \alpha}$ is nonstationary in $V[{\forceP}]$, we must have used $\forceP_{S_{2 \cdot \alpha}}$ in the iteration,
 so $r(\alpha)= 1$.
 
\end{proof}
The second forcing we use is the almost disjoint coding forcing due to R. Jensen and R. Solovay. We will identify subsets of $\omega$ with their characteristic function and will use the word reals for elements of $2^{\omega}$ and subsets of $\omega$ respectively.
Let $D=\{d_{\alpha} \, : \, \alpha < \aleph_1 \}$ be a family of almost disjoint subsets of $\omega$, i.e. a family such that if $d, d' \in D$ then 
$d \cap d'$ is finite. Let $X\subset  \kappa$ for $\kappa \le 2^{\aleph_0}$ be a set of ordinals. Then there 
is a ccc forcing, the almost disjoint coding $\mathbb{A}_D(X)$ which adds 
a new real $x$ which codes $X$ relative to the family $D$ in the following way
$$\alpha \in X \text{ if and only if } x \cap d_{\alpha} \text{ is finite.}$$
\begin{definition}
 The almost disjoint coding $\mathbb{A}_D(X)$ relative to an almost disjoint family $D$ consists of
 conditions $(r, R) \in \omega^{<\omega} \times D^{<\omega}$ and
 $(s,S) < (r,R)$ holds if and only if
 \begin{enumerate}
  \item $r \subset s$ and $R \subset S$.
  \item If $\alpha \in X$ and $d_{\alpha} \in R$ then $r \cap d_{\alpha} = s \cap d_{\alpha}$.
 \end{enumerate}
\end{definition}
For the rest of this paper we let $D \in L$ be the definable almost disjoint family of reals one obtains when recursively adding the $<_L$-least real to the family which is almost disjoint from all the previously picked reals. 
Whenever we use almost disjoint coding forcing, we assume that we code relative to this fixed almost disjoint family $D$.

The last two forcings we briefly discuss are Jech's forcing for adding a Suslin tree with countable conditions and, given a Suslin tree $T$, the associated forcing which adds a cofinal branch through $T$. 
Recall that a set theoretic tree $(T, <)$ is a Suslin tree if it is a normal tree of height $\omega_1$
and has no uncountable antichain. As a result, forcing with a Suslin tree $S$, where conditions are just nodes in $S$, and which we always denote with $S$ again, is a ccc forcing of size $\aleph_1$. 
Jech's forcing to generically add a Suslin tree is defined as follows.

\begin{definition}
 Let $\forceP_J$ be the forcing whose conditions are
 countable, normal trees ordered by end-extension, i.e. $T_1 < T_2$ if and only
 if $\exists \alpha < \text{height}(T_1) \, T_2= \{ t \upharpoonright \alpha \, : \, t \in T_1 \}.$
\end{definition}
It is wellknown that $\forceP_J$ is $\sigma$-closed and
adds a Suslin tree. In fact more is true, the generically added tree $T$ has 
the additional property that for any Suslin tree $S$ in the ground model
$S \times T$ will be a Suslin tree in $V[G]$. This can be used to obtain a robust coding method (see also \cite{Ho} for more applications)

\begin{lemma}\label{oneSuslintreepreservation}
 Let $V$ be a universe and let $S \in V$ be a Suslin tree. If $\forceP_J$ is 
 Jech's forcing for adding a Suslin tree, $g \subset \forceP_J$ be a generic filter and if $T=\bigcup g$ is the generic tree
 then $$V[g][T] \models  S \text{ is Suslin.}$$
\end{lemma}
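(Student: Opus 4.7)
The plan is to reduce the statement to preservation of Suslinity under $\sigma$-closed forcing, via the standard ccc product lemma. In $V[T]$ both $T$ and $S$ are Suslin: $T$ by construction of $\forceP_J$, and $S$ because $\forceP_J$ is $\sigma$-closed and $\sigma$-closed forcings preserve ground-model Suslin trees (standard: a descending $\omega_1$-sequence built through an alleged uncountable antichain or branch of $S$ in $V[T]$ would, using $\sigma$-closure at countable limits, produce such an object in $V$). Hence any uncountable branch in $T \times S$ would project to uncountable branches in $T$ or $S$, so $T \times S$ is Suslin in $V[T]$ iff it has no uncountable antichain, which by the product lemma for ccc forcings is equivalent to $T \Vdash_{V[T]} \check{S}$ being ccc. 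Thus it suffices to show that in $V[T][t]$, where $t$ is a $T$-generic cofinal branch, $S$ remains Suslin.

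The main step is to observe that the two-step iteration $\forceP_J * \dot{T}$, whose generic extension equals $V[T][t]$, is $\sigma$-closed. A condition may be taken as a pair $(T_0, t_0)$ with $T_0 \in \forceP_J$ a countable normal tree and $t_0$ a node of $T_0$, where $(T_1, t_1) \leq (T_0, t_0)$ means $T_1$ end-extends $T_0$ and $t_0 \leq_{T_1} t_1$. Given a descending $\omega$-sequence $(T_n, t_n)_{n < \omega}$, set $\lambda = \sup_n \mathrm{height}(T_n)$, $T_\omega = \bigcup_n T_n$, and $b = \bigcup_n t_n$, a cofinal branch of length $\lambda$ through $T_\omega$. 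For each node $s \in T_\omega$, normality of the $T_n$'s allows one to inductively pick a cofinal sequence $s = s_0 \leq s_1 \leq \dots$ with $s_k \in T_{n_k}$ at strictly increasing levels, yielding a cofinal branch through $T_\omega$ extending $s$. Adjoining $b$ together with one such branch per node of $T_\omega$ as a new top level at $\lambda$ produces a countable normal tree $T_\omega^+$, and $(T_\omega^+, b)$ extends every $(T_n, t_n)$.

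Granted $\sigma$-closure of $\forceP_J * \dot{T}$, the preservation result above immediately gives that $S$ remains Suslin in $V[T][t]$, which by the reduction completes the proof. The principal technical subtlety is the construction of the top level of $T_\omega^+$: one must guarantee that every node of $T_\omega$ has an extension at level $\lambda$ so that normality is preserved, which is achieved by book-keeping through the countably many nodes of $T_\omega$ as sketched. Everything else is either a standard application of the ccc product lemma or a textbook preservation argument for $\sigma$-closed forcing.
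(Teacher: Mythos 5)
Your proposal is correct and follows essentially the same route as the paper: both proofs reduce the statement to exhibiting a dense $\sigma$-closed subset of $\forceP_J \ast \dot{T}$ and then invoking preservation of Suslin trees by $\sigma$-closed forcing. The one slip is the claim that $b=\bigcup_n t_n$ is automatically a cofinal branch of $T_\omega$ --- it need not be if the levels of the $t_n$ are bounded (this is precisely why the paper's dense set insists that $\check{q}$ be a node of the \emph{top} level of $p$, which has successor height) --- but your own bookkeeping construction of cofinal branches repairs this immediately by extending $b$ cofinally before adjoining the top level.
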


\begin{proof}
Let $\dot{T}$ be the $\forceP_J$-name for the generic Suslin tree. We claim that $\forceP_J \ast \dot{T}$ has a dense subset which is $\sigma$-closed. As $\sigma$-closed forcings will always preserve ground model Suslin trees, this is sufficient. To see why the claim is true consider the following set:
$$\{ (p, \check{q}) \, : \, p \in \forceP_J \land height(p)= \alpha+1  \land  \check{q} \text{ is a node of $p$ of level } \alpha \}.$$
It is easy to check that this set is dense and $\sigma$-closed in $\forceP_J \ast \dot{T}$.

\end{proof}

A similar observation shows that we can add an $\omega_1$-sequence of
such Suslin trees with a countably supported iteration.

\begin{lemma}\label{ManySuslinTrees}
 Let $S$ be a Suslin tree in $V$ and let $\forceP$ be a countably supported
 product of length $\omega_1$ of forcings $\forceP_J$. Then in the generic extension
 $V[G]$ there is an $\omega_1$-sequence of Suslin trees $\vec{T}=(T_{\alpha} \, : \, \alpha \in \omega_1)$ such
that for any finite $e \subset \omega$
the tree $S \times \prod_{i \in e} T_i$ will be a Suslin tree in $V[\vec{T}]$.
\end{lemma}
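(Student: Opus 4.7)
My plan is to prove the statement by induction on $|e|$, with each inductive step invoking the preceding lemma. The base case $|e| = 0$ asks that $S$ remain Suslin in $V[G]$, which holds because the countable-support product $\forceP$ of the $\sigma$-closed posets $\forceP_J$ is itself $\sigma$-closed, and $\sigma$-closed forcings preserve ground-model Suslin trees.

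For the inductive step, let $e = e' \cup \{i_0\}$ with $|e'| = |e| - 1$, and decompose $\forceP \cong \forceP^* \times \forceP_J^{(i_0)}$, where $\forceP^*$ is the countable-support product of Jech-forcings indexed by $\omega_1 \setminus \{i_0\}$. Applying the inductive hypothesis with $\forceP^*$ in place of $\forceP$ and $e'$ in place of $e$, the tree $\mathbb{S} := S \times \prod_{i \in e'} T_i$ is Suslin in $V[\forceP^*]$. Since $\forceP^*$ is $\sigma$-closed it adds no new countable sequences, so $V$ and $V[\forceP^*]$ share the same $H(\omega_1)$, which means Jech's forcing as computed in $V[\forceP^*]$ coincides with the $V$-copy $\forceP_J^{(i_0)}$ appearing in our decomposition, and the generic Suslin tree added by $\forceP_J^{(i_0)}$ over $V[\forceP^*]$ is precisely $T_{i_0}$. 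Applying the preceding lemma internally in $V[\forceP^*]$, with ground-model Suslin tree $\mathbb{S}$ and Jech-generic Suslin tree $T_{i_0}$, we conclude that $\mathbb{S} \times T_{i_0} = S \times \prod_{i \in e} T_i$ is Suslin in $V[\forceP^*][T_{i_0}] = V[G]$, completing the induction.

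The main obstacle is the bookkeeping around the decomposition: one must verify that the $V$-version and the $V[\forceP^*]$-version of Jech's forcing really do coincide (so the preceding lemma is applicable from the perspective of $V[\forceP^*]$), and that the second coordinate of the product generic is genuinely a Jech-generic Suslin tree over $V[\forceP^*]$. Both points rest on $\sigma$-closedness of $\forceP^*$ and on the product lemma. Once these are in place the induction is essentially formal: $n$ applications of the preceding lemma, one per index of $e$, yield the conclusion. An alternative route — which I would only fall back on if the decomposition caused trouble — would be to prove directly that $\forceP \ast \prod_{i \in e}\dot T_i$ has a $\sigma$-closed dense subset consisting of conditions $(p,(\check t_{i_1},\dots,\check t_{i_n}))$ where each $p(i_j)$ has height $\alpha+1$ with $t_{i_j}$ a distinguished level-$\alpha$ node, and then invoke the product theorem together with the fact that forcing with a Suslin tree adds no reals; but the inductive route above is cleaner since it isolates the essential step to the already-proved single-tree case.
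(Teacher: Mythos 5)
Your argument is correct. The paper itself gives no written proof of this lemma --- it is stated immediately after the remark ``a similar observation shows that we can add an $\omega_1$-sequence of such Suslin trees with a countably supported product,'' the intended observation being precisely your fallback: the two-step iteration $\forceP \ast \prod_{i\in e}\dot T_i$ has a dense $\sigma$-closed subset (conditions whose coordinates are trees of successor height with the tree-conditions sitting on the top level), so the whole extension $V[G][b_{i_1}]\cdots[b_{i_n}]$ is a $\sigma$-closed extension of $V$, whence $\prod_{i\in e}T_i$ forces $S$ to remain Suslin over $V[G]$ and $S\times\prod_{i\in e}T_i$ is ccc. Your primary route --- induction on $|e|$, peeling off one coordinate via the product decomposition $\forceP\cong\forceP^*\times\forceP_J^{(i_0)}$ and invoking the single-tree lemma over $V[\forceP^*]$ --- is a genuine alternative and is sound: the two points you isolate ($\forceP_J$ is computed identically in $V$ and $V[\forceP^*]$ because $\sigma$-closed forcing adds no new elements of $H(\omega_1)$, and the coordinate generic is generic over $V[\forceP^*]$ by the product lemma) are exactly the right ones, and the induction hypothesis transfers because deleting one index from an $\omega_1$-indexed product leaves an isomorphic product over the same ground model. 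What the inductive route buys is that it uses the preceding lemma as a black box, one application per element of $e$; what the direct route buys is a single uniform $\sigma$-closure argument that also makes transparent why the whole family is ``independent'' in the sense the paper later defines, which is how the paper actually uses the lemma. Either write-up would be acceptable; if you go with the induction, just state the induction hypothesis so that it quantifies over the index set (the $e\subset\omega$ in the lemma should be read as $e\subset\omega_1$) and note that the finite product of normal $\omega_1$-trees is taken level-wise so that it is again a normal tree.
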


These sequences of Suslin trees will be used for coding in our proof and get a name.
\begin{definition}
 Let $\vec{T} = (T_{\alpha} \, : \, \alpha < \kappa)$ be a sequence of Suslin trees. We say that the sequence is an 
 independent family of Suslin trees if for every finite set $e= \{e_0, e_1,...,e_n\} \subset \kappa$ the product $T_{e_0} \times T_{e_1} \times \cdot \cdot \cdot \times T_{e_n}$ 
 is a Suslin tree again.
\end{definition}
The upshot of being an independent sequence is that we can pick our favourite subset of indices and decide to shoot a branch through every tree whose index belongs to the set, while guaranteeing that no other Suslin tree from the sequence is destroyed. The following fact can be easily seen via induction on $\kappa$.
\begin{fact}
Let $\vec{T} = (T_{\alpha} \, : \, \alpha < \kappa)$ be independent and let $I \subset \kappa$ be arbitrary.
If we form the finitely supported product of forcings $\forceP:=\prod_{\alpha \in I} T_{\alpha}$, then for every $\beta \notin I$,
$V[\forceP] \models ``T_{\beta}$ is a Suslin tree$"$.
\end{fact}
Thus independent Suslin trees are suitable to encode information, as soon as we can make the independent sequence definable. 
\section{A first step towards the proof of the boldface separation property}

\subsection{The ground model $W$ of the iteration}
We have to first create a suitable ground model $W$ over which the actual iteration will take place. $W$ will be a generic extension of $L$, satisfying $\CH$ and, as stated already earlier, has the property that it contains two $\omega_1$-sequences $\vec{S}=\vec{S^1} \cup \vec{S^2}$ of mutually independent Suslin trees. 
The goal is to add the trees generically, and in a second forcing, use an $L$-definable sequence of stationary subsets of $\omega_1$ to code up the trees. The resulting universe will have the feature that any further outer universe, which preserves stationary subsets, can decode the information written into the $L$-stationary subsets in a $\Sigma_1(\omega_1)$-definable way, and hence has access to the sequence of independent Suslin trees $\vec{S}$. This property can be used to create two $\Sigma_1(\omega_1)$-predicates which are empty first and which can be filled with arbitrary reals $x$, using $\aleph_1$-sized forcings with the countable chain condition. These forcing have the crucial feature that they will be independent of the ground model they live in, a feature we will exploit heavily later on.

We start with G\"odels constructible universe $L$ as our 
ground model.
Next we fix an appropriate sequence of stationary subsets of $\omega_1$.
Recall that $\diamondsuit$ holds in our ground model $L$, i.e. there is a 
$\Sigma_1$-definable sequence $(a_{\alpha} \, : \, \alpha < \omega_1)$ of countable subsets of $\omega_1$
such that any set $A \subset \omega_1$ is guessed stationarily often by the $a_{\alpha}$'s, i.e.
$\{ \alpha < \omega_1 \, : \, a_{\alpha}= A \cap \alpha \}$ is a stationary subset of $\omega_1$. 
The $\diamondsuit$-sequence can be used to produce an easily definable sequence of stationary subsets: using a definable bijection between $\omega_1$ and $\omega_1 \cdot \omega_1$, we list the reals in $L$ in an $\omega_1 \cdot \omega_1$ sequence $(r_{\beta} \, : \, \beta < \omega_1 \cdot \omega_1)$ and define for every $\beta < \omega_1 \cdot \omega_1$
a stationary set in the following way:
$$R_{\beta} := \{ \alpha < \omega_1  \, : \, a_{\alpha}= r_{\beta}   \}.$$
and let $\vec{R}= (R_{\beta} \, : \, \beta < \omega_1 \cdot \omega_1)$ denote the sequence.

We proceed with adding an $\omega_1$-sequence of Suslin trees with a countably supported product of Jech's Forcing $ \forceP_J$. We let 
\[\forceR := \prod_{\beta \in \omega_1} \forceP_J \] using countable support. This is a $\sigma$-closed, hence proper notion of forcing. We denote the generic filter of $\forceR$ with $\vec{S}=(S_{\alpha} \, : \, \alpha < \omega_1)$ and note that whenever $I \subset \omega_1$ is a set of indices then for every $j \notin I$, the Suslin tree $S_j$ will remain a Suslin tree in the universe $L[\vec{S}][g]$, where $g \subset \prod_{i \in I} S_i$ denotes the generic filter for the forcing with the finitely supported product of the trees $S_i$, $i \in I$ (see \cite{Ho} for a proof of this fact). We fix a definable bijection between $[\omega_1]^{\omega}$ and $\omega_1$ and identify the trees in $(S_{\alpha }\, : \, \alpha < \omega_1)$ with their images under this bijection, so the trees will always be subsets of $\omega_1$ from now on. 

In a second step, we destroy each element of $\vec{S}$, via adding generically branches. That is, we let the second forcing
\[ \forceR':= \prod_{\alpha < \omega_1}  S_{\alpha}, \]
using countable support. Note that by the argument from the proof of Lemma \ref{oneSuslintreepreservation}, $\forceR \ast \forceR'$ has a dense subset which is $\sigma$-closed, hence the $L[\forceR \ast \forceR']$ is a $\sigma$-closed generic extension of $L$.

In a third step we code the trees from $\vec{S}$ into the  sequence of $L$-stationary subsets $\vec{R}$ we produced earlier, using club shooting forcing. It is important to note, that the forcing we are about to define does preserve Suslin trees, a fact we will show later.
The forcing used in the second third will be denoted by $\mathbb{S}$. Fix $\alpha< \omega_1$ and consider the $\omega_1$ tree $S_{\alpha} \subset \omega_1$. We let $\forceR_{\alpha}$ be the countable support product which codes the characteristic function of $S_{\alpha}$ into the $\alpha$-th $\omega_1$-block of the $R_{\beta}$'s. 

$$\forceR_{\alpha} = \prod_{\gamma \in S_{\alpha}} \forceP_{\omega_1 \backslash R_{\omega_1 \cdot \alpha + 2 \cdot \gamma}} \times \prod_{\gamma \notin S_{\alpha}} \forceP_{\omega_1 \backslash R_{\omega_1 \cdot \alpha  + 2 \cdot \gamma +1}} $$ 
Recall that for a stationary, co-stationary $R \subset \omega_1$, $\forceP_{R}$ denotes the club shooting forcing which shoots a club through $R$, thus $\forceR_{\alpha}$ codes up the tree $S_{\alpha}$ via writing the 0,1-pattern of the characteristic function of $S_{\alpha}$ into the $\alpha$-th $\omega_1$-block of $\vec{R}$.

If we let $R$ be some stationary subset of $\omega_1$ which is disjoint from all the $R_{\alpha}$'s, whose existence is guaranteed by $\diamondsuit$, then it is obvious that for every $\alpha < \omega_1$, $\forceR_{\alpha}$ is an $R$-proper forcing which additionally is $\omega$-distributive.  Then we let $\mathbb{S}$ be the countably supported iteration, $$\mathbb{S}:=\bigstar_{\alpha< \omega_1} \forceR_{\alpha}$$ which is again $R$-proper and $\omega$-distributive.
This way we can turn the generically added sequence of $\omega_1$-trees $\vec{S}$ into a definable sequence of $\omega_1$-trees.
Indeed, if we work in $L[\vec{S}\ast G]$, where $\vec{S} \ast G$ is $\forceR \ast \mathbb{S}$-generic over $L$, then 
\begin{align*}
\forall \alpha, \gamma < \omega_1 (&\gamma \in S_{\alpha} \Leftrightarrow R_{\omega_1 \cdot \alpha + 2 \cdot \gamma} \text{ is not stationary and} \\ &
\gamma \notin S_{\alpha} \Leftrightarrow  R_{\omega_1 \cdot \alpha + 2 \cdot \gamma +1} \text{ is not stationary})
\end{align*}
Note here that the above formula can be written in a $\Sigma_1(\omega_1)$-way, as  it reflects down to $\aleph_1$-sized, transitive models of $\ZFP$ which contain a club through exactly one element of every pair $\{(R_{\alpha}, R_{\alpha+1}) \, : \, \alpha < \omega_1\}$.
Finally we partition $\vec{S}$ into its even and its odd members and let 
\[ \vec{S^1}:= \{ S_{\alpha} \in \vec{S} \, : \, \alpha \text{ is even } \} \]
and 
\[ \vec{S^2}:= \{ S_{\beta} \in \vec{S} \, : \, \beta \text { is odd} \} \]
Again, both sequences $\vec{S^1}$ and $\vec{S^2}$ are $\Sigma^1(\omega_1)$-definable in $W$ and all stationary set preserving outer models of $W$.

Our goal is to use $\vec{S^1}$ and $\vec{S^2}$ for coding again. For this it is essential, that both sequences remain independent in the inner model $L[\forceR][\mathbb{S}]$,  after forcing with $\mathbb{S}$. 
The following  line of reasoning is similar to \cite{Ho}.
Recall that for a forcing $\forceP$ and $M \prec H(\theta)$, a condition $q \in \forceP$ is $(M,\forceP)$-generic iff for every maximal antichain $A \subset \forceP$, $A \in M$, it is true that $ A \cap M$ is predense below $q$.
The key fact is the following (see \cite{Miyamoto2} for the case where $\forceP$ is proper)
\begin{lemma}\label{preservation of Suslin trees}
 Let $T$ be a Suslin tree, $S \subset \omega_1$ stationary and $\forceP$ an $S$-proper
 poset. Let $\theta$ be a sufficiently large cardinal.
 Then the following are equivalent:
 \begin{enumerate}
  \item $\Vdash_{\forceP} T$ is Suslin
 
  \item if $M \prec H_{\theta}$ is countable, $\eta = M \cap \omega_1 \in S$, and $\forceP$ and $T$ are in $M$,
  further if $p \in \forceP \cap M$, then there is a condition $q<p$ such that 
  for every condition $t \in T_{\eta}$, 
  $(q,t)$ is $(M, \forceP \times T)$-generic.
 \end{enumerate}

\end{lemma}

\begin{proof}
For the direction from left to right note first that $\Vdash_{\forceP} T$ is Suslin implies $\Vdash_{\forceP} T$ is ccc, and in particular it is true that for any countable elementary submodel $N[\dot{G}_{\forceP}] \prec H(\theta)^{V[\dot{G}_{\forceP}]}$,  $\Vdash_{\forceP} \forall t \in T (t$ is $(N[\dot{G}_{\forceP}],T)$-generic). Now if $M \prec H(\theta)$ and $M \cap \omega_1 = \eta \in S$ and $\forceP,T \in M$ and $p \in \forceP \cap M$ then there is a $q<p$ such that $q$ is $(M,\forceP)$-generic. So $q \Vdash \forall t \in T (t$ is $(M[\dot{G}_{\forceP}], T)$-generic, and this in particular implies that $(q,t)$ is $(M, \forceP \times T)$-generic for all $t \in T_{\eta}$. 

For the direction from right to left assume that $\Vdash \dot{A} \subset T$ is a maximal antichain. Let $B=\{(x,s) \in \forceP \times T \, : \, x \Vdash_{\forceP} \check{s} \in \dot{A} \}$, then $B$ is a predense subset in $\forceP \times T$. Let $\theta$ be a sufficiently large regular cardinal and let $M \prec H(\theta)$ be countable such that $M \cap \omega_1=\eta \in S$ and $\forceP, B,p,T \in M$. By our assumption there is a $q <_{\forceP} p$  such that $\forall t \in T_{\eta} ((q,t)$ is $(M, \forceP \times T)$-generic). So $B \cap M$ is predense below $(q,t)$ for every $t \in T_{\eta}$, which yields that $q \Vdash_{\forceP} \forall t \in T_{\eta} \exists s<_{T} t(s \in \dot{A})$ and hence $q \Vdash \dot{A} \subset T \upharpoonright \eta$, so $\Vdash_{\forceP} T$ is Suslin.
\end{proof}
In a similar way, one can show that Theorem 1.3 of \cite{Miyamoto2} holds true if we replace proper by $S$-proper for $S \subset \omega_1$ a stationary subset.
\begin{theorem}
Let $(\forceP_{\alpha})_{\alpha < \eta}$ be a countable support iteration of length $\eta$, let $S \subset \omega_1$ be stationary and suppose that for every $\alpha < \eta$, for the $\alpha$-th factor of the iteration $\dot{\forceP}(\alpha)$ it holds that $\Vdash_{\alpha}  ``\dot{\forceP}(\alpha)$ is $S$-proper and 
preserves every Suslin tree.$"$ Then $\forceP_{\eta}$ is $S$-proper and preserves every Suslin tree.
\end{theorem}
So in order to argue that our forcing $\mathbb{S}$ preserves Suslin trees if used over $L[\forceR]$, it is sufficient to show that every factor preserves Suslin trees.
This is indeed the case.
\begin{lemma}
Let $S \subset \omega_1$ be stationary, co-stationary, then the club shooting forcing $\forceP_S$ preserves Suslin trees.
\end{lemma}

\begin{proof}
Because of Lemma \ref{preservation of Suslin trees}, it is enough to show that for any regular and sufficiently large
 $\theta$, every $M \prec H_{\theta}$ with $M \cap \omega_1 = \eta \in S$, and every
 $p \in \forceP_S \cap M$ there is a $q<p$ such that for every
 $t \in T_{\eta}$, $(q,t)$ is $(M,(\forceP_S \times T))$-generic.
 Note first that as $T$ is Suslin, every node $t \in T_{\eta}$ is an
 $(M,T)$-generic condition. Further, as forcing with a Suslin tree
 is $\omega$-distributive, $M[t]$ has the same $M[t]$-countable sets as $M$.
 It is not hard to see that if $M\prec H(\theta)$ is such
 that $M \cap \omega_1 \in S$ then an $\omega$-length descending sequence
 of $\forceP_S$-conditions in $M$ whose domains converge to $M \cap \omega_1$
 has a lower bound as $M \cap \omega_1 \in S$.
 
 We construct an $\omega$-sequence of elements of $\forceP_S$ which has a lower bound
 which will be the desired condition. 
 We list the nodes on $T_{\eta}$, $(t_i \, : \, i \in \omega)$ and
 consider the according generic extensions $M[t_i]$.
 In every $M[t_i]$ we list the $\forceP_S$-dense subsets of $M[t_i]$,
 $(D^{t_i}_n \, : \, n \in \omega)$ and write 
 the so listed dense subsets of $M[t_i]$ as an $\omega \times \omega$-matrix and enumerate
 this matrix in an $\omega$-length sequence of dense sets $(D_i \, : \, i \in \omega)$.
 If $p=p_0 \in \forceP_S \cap M$ is arbitrary we can find, using the fact that $\forall i \, (\forceP_S \cap M[t_i] = M \cap \forceP_S$), an $\omega$-length, descending
 sequence of conditions below $p_0$ in $\forceP_S \cap M$, $(p_i \, : \, i \in \omega)$
 such that $p_{i+1} \in M \cap \forceP_S$ is in $D_i$.
 We can also demand that the domain of the conditions $p_i$ converge to $M \cap \omega_1$.
 Then the $(p_i)$'s have a lower bound $p_{\omega} \in \forceP_S$ and $(t, p_{\omega})$ is an
 $(M, T \times \forceP_S)$-generic conditions for every $t \in T_{\eta}$ as any $t \in T_{\eta}$ is $(M,T)$-generic
 and every such $t$ forces that $p_{\omega}$ is $(M[T], \forceP_S)$-generic; moreover $p_{\omega} < p$ as 
 desired.
 \end{proof}
 
Putting things together we obtain:

\begin{theorem}
The forcing $\mathbb{S}$, defined above preserves Suslin trees.
\end{theorem}

Let us set $W:= L[\forceR \ast (\forceR' \times \mathbb{S}) ]$ which will serve as our ground model for a second iteration of length $\omega_1$. Note that $W$ satisfies that it is an $\omega$-distributive generic extension of $L$.

We end with a straightforward lemma which is used later in coding arguments.

\begin{lemma}\label{a.d.coding preserves Suslin trees}
 Let $T$ be a Suslin tree and let $\mathbb{A}_F(X)$ be the almost disjoint coding which codes
 a subset $X$ of $\omega_1$ into a real with the help of an almost disjoint family
 of reals of size $\aleph_1$. Then $$\Vdash_{\mathbb{A}_{F}(X)} T \text{ is Suslin }$$
 holds.
\end{lemma}
\begin{proof}
 This is clear as $\mathbb{A}_{F}(X)$ has the Knaster property, thus the product $\mathbb{A}_{F}(X) \times T$ is ccc and $T$ must be Suslin in $V^{\mathbb{A}_{F}(X)}$. 
\end{proof}

\subsection{Coding reals into Suslin trees}
We introduced the model $W$ for one specific purpose: the possibility to code up reals into the sequence of definable Suslin trees $\vec{S^1}$ or $\vec{S^2}$ using a method which is not sensitive to its ground model.

For the following, we let $W$ be our ground model, though the definitions will work, and will be used for suitable outer models of $W$ as well. We will encounter this situation as ultimately we will iterate the coding forcings we are about to define.
Let $x \in W$  be an arbitrary real, ;et $m,k \in \omega$, let $(x,m,k)$ denote the real which codes the triple consisting of $x,m$ and $k$ in some fixed recursive way, and let $i \in \{ 1,2\}$. Then we shall define the forcing $\hbox{Code}(x,i)$, which codes the real $x$ into $\aleph_1$-many $\omega$-blocks of $\vec{S^i}$ as a two step iteration:

\[ \hbox{Code}((x,m,k),i):= \mathbb{C} (\omega_1)^L \ast \dot{\mathbb{A}} (\dot{Y}_{x,i}) \]
where the first factor is ordinary $\omega_1$-Cohen forcing, but defined in $L$, and the second factor codes a specific subset of $\omega_1$ denoted with $Y_{(x,m,k),i}$ into a real using almost disjoint coding forcing relative to the canonical, constructible almost disjoint family of reals $D$.
We emphasize, that in iterations of coding forcings, we still fall back to force with $(\mathbb{C} (\omega_1))^L$ as our first factor, that is we never use the $\omega_1$-Cohen forcing of the current universe. Thus, iterating the coding forcings is in fact a hybrid of a product (namely the coordinates where we use $(\mathbb{C}(\omega_1))^L$)  and a finites support iteration (the coordinates where we use the almost disjoint coding forcing). We shall discuss this later in more detail.

We let $g \subset \omega_1$ be a $\mathbb{C} (\omega_1)^L$-generic filter over $W$, and let $\rho: [\omega_1]^{\omega} \rightarrow \omega_1$ be some canonically definable, constructible bijection between
these two sets. We use $\rho$ and $g$ to define the set $h \subset \omega_1$, which eventually shall be the set of indices of $\omega$-blocks of $\vec{S}^i$, where we code up the characteristic function of the real ($(x,m,k)$. Let $h:= \{\rho( g \cap \alpha) \,: \, \alpha < \omega_1 \}$ and let $X \subset \omega_1$ be the $<$-least set  (in some previously fixed well-order of $H(\omega_2)^{W[g]}$ which codes the follwing objects:
\begin{itemize}
\item The $<$-least set of $\omega_1$-branches in $W$ through elments of $\vec{S}$ which code $(x,m,k)$ at $\omega$-blocks which start at values in $h$, that is  we collect $\{ b_{\beta} \subset S_{\beta} \, : \, \beta= \omega \gamma + 2n, \gamma \in h \land n \in \omega \land n \notin (x,m,k) \}$ and  $\{ b_{\beta} \subset S_{\beta} \, : \, \beta= \omega \gamma + 2n+1, \gamma \in h \land n \in \omega \land n \in (x,m,k) \}$.
\item The $<$-least set of $\omega_1 \cdot \omega \cdot \omega_1$-many club subsets through $\vec{R}$, our $\Sigma_1 (\omega_1)$-definable sequence of $L$-stationary subsets of $\omega_1$ from the last section, which are necessary to compute every tree $S_{\beta} \in \vec{S}$ which shows up in the above item, using the $\Sigma_1 (\omega_1)$-formula from the previous section before Lemma 2.10.
\end{itemize}

Note that, when working in $L[X]$ and if $\gamma \in h$ then
 we can read off $(x,m,k)$ via looking at the $\omega$-block of $\vec{S^i}$-trees starting at $\gamma$ and determine which tree has an $\omega_1$-branch in $L[X]$:
\begin{itemize}
 \item[$(\ast)$]  $n \in (x,m,k)$ if and only if $S^i_{\omega \cdot \gamma +2n+1}$ has an $\omega_1$-branch, and $n \notin (x,m,k)$ if and only if $S^i_{\omega \cdot \gamma +2n}$ has an $\omega_1$-branch.
\end{itemize}
Note that $(\ast)$ is actually a formula $(\ast) ((x,y,m) ,\gamma)$ with two parameters $(x,y,m)$ and $\gamma$ but we will suppress this, as the parameters usually are clear from the context.
Indeed if $n \notin (x,m,k)$ then we added a branch through $S^i_{\omega \cdot \gamma+ 2n}$. If on the other hand $S^i_{\omega \cdot\gamma +2n}$ is 
Suslin in $L[X]$ then we must have added an $\omega_1$-branch through $S^i_{\omega \cdot \gamma +2n+1}$ as we always add an $\omega_1$-branch through either $S^i_{\omega \cdot \gamma +2n+1}$ or $S^i_{\omega \cdot \gamma +2n}$ and adding branches through some $S^i_{\alpha}$'s  will not affect that some $S^i_{\beta}$ is Suslin in $L[X]$, as $\vec{S}$ is independent.

We note that we can apply an argument resembling David's trick in this situation. We rewrite the information of $X \subset \omega_1$ as a subset $Y \subset \omega_1$ using the following line of reasoning.
It is clear that any transitive, $\aleph_1$-sized model $M$ of $\ZFP$ which contains $X$ will be able to correctly decode out of $X$ all the information.
Consequentially, if we code the model $(M,\in)$ which contains $X$ as a set $X_M \subset \omega_1$, then for any uncountable $\beta$ such that $L_{\beta}[X_M] \models \ZFP$ and $X_M \in L_{\beta}[X_M]$:
\[L_{\beta}[X_M] \models \text{\ldq The model decoded out of }X_M \text{ satisfies $(\ast)$ for every $\gamma \in h \subset \omega_1$\rdq.} \]
In particular there will be an $\aleph_1$-sized ordinal $\beta$ as above and we can fix a club $C \subset \omega_1$ and a sequence $(M_{\alpha} \, : \, \alpha \in C)$ of countable elementary submodels such that
\[\forall \alpha \in C (M_{\alpha} \prec L_{\beta}[X_M] \land M_{\alpha} \cap \omega_1 = \alpha)\]
Now let the set $Y\subset \omega_1$ code the pair $(C, X_M)$ such that the odd entries of $Y$ should code $X_M$ and if $Y_0:=E(Y)$ where the latter is the set of even entries of $Y$ and $\{c_{\alpha} \, : \, \alpha < \omega_1\}$ is the enumeration of $C$ then
\begin{enumerate}
\item $E(Y) \cap \omega$ codes a well-ordering of type $c_0$.
\item $E(Y) \cap [\omega, c_0) = \emptyset$.
\item For all $\beta$, $E(Y) \cap [c_{\beta}, c_{\beta} + \omega)$ codes a well-ordering of type $c_{\beta+1}$.
\item For all $\beta$, $E(Y) \cap [c_{\beta}+\omega, c_{\beta+1})= \emptyset$.
\end{enumerate}
We obtain
\begin{itemize}
\item[$({\ast}{\ast})$] For any countable transitive model $M$ of $\ZFP$ such that $\omega_1^M=(\omega_1^L)^M$ and $ Y \cap \omega_1^M \in M$, $M$ can construct its version of the universe $L[Y \cap \omega_1^M]$, and the latter will see that there is an $\aleph_1^M$-sized transitive model $N \in L[Y \cap \omega_1^M]$ which models $(\ast)$ for $(x,m,k)$ and every $\gamma \in h \subset \omega_1^M$.
\end{itemize}
Thus we have a local version of the property $(\ast)$.

In the next step $\dot{\mathbb{A}} (\dot{Y})$, working in $W[g]$, for $g\subset \mathbb{C} (\omega_1)$ generic over $W$, we use almost disjoint forcing $\mathbb{A}_D(Y)$ relative to the $<_L$-least almost disjoint family of reals $D \in  L $ to code the set $Y$ into one real $r$. This forcing is well-known, has the ccc and its definition only depends on the subset of $\omega_1$ we code, thus the almost disjoint coding forcing  $\mathbb{A}_D(Y)$ will be independent of the surrounding universe in which we define it, as long as it has the right $\omega_1$ and contains the set $Y$.

We finally obtained a real $r$ such that
\begin{itemize}
\item[$({\ast}{\ast}{\ast})$] For any countable, transitive model $M$ of $\ZFP$ such that $\omega_1^M=(\omega_1^L)^M$ and $ r  \in M$, $M$ can construct its version of $L[r]$ which in turn thinks that there is a transitive $\ZFP$-model $N$ of size $\aleph_1^M$  such that $N$ believes $(\ast)$ for $(x,m,k)$ and every $\gamma \in h$.
\end{itemize}

Note that the above is a $\Pi^1_2(r)$-statement.
We say in this situation that the real $(x,m,k)$\emph{ is written into $\vec{S}^i$}, or that $(x,m,k)$ \emph{is coded into} $\vec{S^i}$.
If $(x,m,k)$ is coded into $\vec{S}^i$ and $r$ is a real witnessing this, then the set $h$ which is equal to $\{ \gamma < \omega_1 \, ;\, \gamma \text{ is a starting point for an $\omega$-block where $(\ast)$ }$ for $(x,y,m)$ holds$\}$ is dubbed (following \cite{FS}) the coding area of $(x,m,k)$ with respect to $r$.

We want to iterate these coding forcings. As the first factor of a coding forcing will always be $(\mathbb{C}(\omega_1))^L$, an iteration of the coding forcing is in fact a hybrid of a (countably supported) product (namely the coordinates where we use $(\mathbb{C}(\omega_1))^L$)  and an actual finite support iteration (the coordinates where we use almost disjoint coding forcing).

\begin{definition}
A mixed support iteration $\forceP=(\forceP_{\beta}\,:\, {\beta< \alpha})$ is called legal if $\alpha < \omega_1$ and there exists a bookkeeping function $F: \alpha \rightarrow H(\omega_2)^2$ such that 
 $\forceP$ is defined inductively using $F$ as follows:
 \begin{itemize}
 \item If $F(0)=(x,i)$, where $x$ is a real, $i\in \{1,2\}$, then $\forceP_0= \hbox{Code}({x,i} )$. Otherwise $\forceP_0$ is the trivial forcing.
 \item If $\beta>0$ and $\forceP_{\beta}$ is defined, $G_{\beta} \subset \forceP_{\beta}$ is a generic filter over $W$, $F(\beta)=(\dot{x}, i)$, where $\dot{x}$ is a $\forceP_{\beta}$-name of a real, $i \in \{1,2\}$ and $\dot{x}^{G_{\beta}}=x$ then, working in $W[G_{\beta}]$ we let
 $\forceP(\beta):= \hbox{Code} ({x,i} )$, that is we code $x$ into the $\vec{S}^i$, using our coding forcing. We shall use full (i.e. countable) support on the $(\mathbb{C}(\omega_1))^L$-coordinates and finite support on the coordinates where we use almost disjoint coding forcing.
 \end{itemize}
 
\end{definition}
 Informally speaking, a legal  forcing just decides to code the reals which the bookkeeping $F$ provides into either $\vec{S^1}$ or $\vec{S^2}$. Note further that the notion of legal  can be defined in exactly the same way over any $W[G]$, where $G$ is a $\forceP$-generic filter over $W$ for an legal  forcing. Finally note that instead of creating $\omega$-blocks of Suslin trees using $\mathbb{C} (\omega_1)^L$ where we code the branches every single time we code a real, we could have also defined an altered ground model $W'$ as $W[g]$, where $g \subset \prod \mathbb{C} (\omega_1)$ is generic for the countably supported product of $\aleph_1$-many copies of $\omega_1$-Cohen forcing, and then worked over $W'$ using exclusively almost disjoint coding forcings which pick first one coordinate $g_{\alpha}$, $\alpha< \omega_1$ of $g$ in an injective way, and then code the $\aleph_1$-many branches along $g_{\alpha}$ using almost disjoint coding forcings as described above. The difference between these approaches is only of symbolic nature, we opted for the one we chose because of a slightly neater presentation. 
 
 We obtain the following first properties of legal  forcings:
 \begin{lemma}

\begin{enumerate}
\item If $\forceP=(\forceP(\beta) \, : \, \beta < \delta) \in W$ is legal  then for every $\beta < \delta$, $\forceP_{\beta} \Vdash| \forceP(\beta)|= \aleph_1$, thus every factor of $\forceP$ is forced to have size $\aleph_1$.
\item Every legal  forcing over $W$ preserves $\aleph_1$ and $\CH$.
\item The product of two legal  forcings is legal  again.
\end{enumerate}
\end{lemma}
\begin{proof}
The first assertion follows immediately from the definition.

To see the second item we exploit some symmetry.
Indeed, every legal  $\forceP = \bigstar_{\beta < \delta} P(\beta)= \bigstar_{\beta < \delta} ( ((\mathbb{C} (\omega_1))^L \ast \dot{\mathbb{A}} (\dot{Y_{\beta} }) )$ can be rewritten as \[(\prod_{\beta < \delta}  (\mathbb{C} (\omega_1))^L  )\ast \bigstar_{\beta < \delta}  \dot{\mathbb{A}}_D (\dot{Y}_{\beta} )\] (again with mixed support). The latter representation is easily seen to be of the form $\forceP \ast \bigstar_{\beta < \delta}  \dot{\mathbb{A}}_D(\dot{Y}_{\beta} )$, where $\forceP$ is $\sigma$-closed and the second part is a finite support iteration of ccc forcings, hence $\aleph_1$ is preserved. That $\CH$ holds is standard.

To see that the third item is true, we recall that the definition of $\hbox{Code} _{x,i}$ is independent of the surrounding universe as long as it contains the real $x$, thus we see that a two step iteration $\forceP_1 \ast \forceP_2$ of two legal  $\forceP_1, \forceP_2 \in W$ is in fact a product.  As the iteration of two legal  forcings (in fact the iteration of countably many legal  forcings) is legal  as well, the proof is done.
\end{proof}

The second assertion of the last lemma immediately gives us the following:
\begin{corollary}
Let $\forceP= (\forceP(\beta) \, : \, \beta < \delta) \in W$ be an legal  forcing over $W$. Then $W[\forceP] \models \CH$. Further, if $\forceP= (\forceP(\alpha) \, : \, \alpha < \omega_1) \in W$ is an $\omega_1$-length iteration such that each initial segment of the iteration is legal  over $W$, then $W[\forceP] \models \CH$.

\end{corollary}

In an iteration of coding forcing we do not add any unwanted or accidental solutions to our $\Sigma^1_3$ predicate give by $({\ast} {\ast} {\ast})$, which we shall show now.

The set of triples of (names of) reals which are enumerated by the bookkeeping function $F \in W$ which comes along with an legal $\forceP = (\forceP(\beta) \, : \, \beta < \delta)$, we call the set of reals coded by $\forceP$. That is, if \[ \forceP(\beta)= (\mathbb{C}(\omega_1))^L \ast \dot{\mathbb{A}}_D (\dot{Y}_{(\dot{x}_{\beta}, \dot{y}_{\beta}, \dot{m}_{\beta} ) } ) \] and $G \subset \forceP$ is a generic filter and if we let for every $\beta < \delta$,
$ \dot{x}_{\beta}^G =:x_{\beta}$, $\dot{y}_{\beta}^G =:y_{\beta}$, $\dot{m}_{\beta}^G =:m_{\beta}$,  then
$\{ (x_{\beta},y_{\beta},m_{\beta} ) \, : \, \beta < \alpha \}$ is the set of reals coded by $\forceP$ and $G$ (though we will suppress the $G$).

\begin{lemma}
If $\forceP \in W$ is legal, $\forceP=(\forceP_{\beta} \, : \, \beta < \delta)$, $G \subset \forceP$ is generic over $W$ and $\{ (x_{\beta},y_{\beta},m_{\beta} ) \, : \, \beta < \delta\}$ is the set of (triples of) reals which is coded as we use $\forceP$. Let 
\[A:= \{ (x,m,k) \in W[G] \, : \, \exists r ( ({\ast}{\ast}{\ast} )\text{ holds for $r$ and $(x,m,k)$} \}.  \] Then
in $W[G]$, the set of reals which belong to $A$ is exactly 
$\{ (x_{\beta},y_{\beta},m_{\beta} ) \, : \, \beta < \delta\}$, that is, we do not code any unwanted information accidentally.
\end{lemma}
\begin{proof}
Let $G$ be $\forceP$ generic over $W$. Let $g= (g_{\beta} \, : \, {\beta} < \delta)$ be the set of the $\delta$ many $\omega_1$ subsets added by the $(\mathbb{C} (\omega_1))^L$-part of the factors of $\forceP$. We let $\rho : ([\omega_1]^{\omega})^L \rightarrow \omega_1$ be our fixed, constructible bijection and let $h_{\beta}= \{ \rho (g_{\beta} \cap \alpha) \, : \, \alpha < \omega_1\}$. Note that the family $\{h_{\beta} \,: \, \beta < \delta \}$ forms an almost disjoint family of subsets of $\omega_1$. Thus there is $\alpha < \omega_1$ such that $\alpha> h_{\beta_1}\cap h_{\beta_2}$ for $\beta_1 \ne \beta_2 < \delta$ and additionally, $\alpha$ is an index not used by the iterated coding forcing $\forceP$, where we say that an index $i$ of $\vec{S}$ is used by $\forceP$ whenever an $\omega_1$-branch through $S_i$ is coded by a factor of $\forceP$.

We fix such an $\alpha$ and $S_{\alpha} \in \vec{S}$. We claim that there is no real in $W[G]$ such that $W[G] \models L[r] \models ``S_{\alpha}$ has an $\omega_1$-branch$"$.
We show this by pulling the forcing $S_{\alpha}$ out of $\forceP$. 
Indeed if we consider $W[\forceP]=L[\forceQ^0] [\forceQ^1][\forceQ^2][\forceP]$, and if $S_{\alpha}$ is as described already,
we can rearrange this to $W[\forceP]= L [\forceQ^0] [\forceQ'^1 \times S_{\alpha} ] [ \forceQ^2] [\forceP] = W[\forceP'] [S_{\alpha} ]$, where $\forceQ'^1$ is $\prod_{\beta \ne \alpha}  S_{\beta}$ and $\forceP'$ is $\forceQ^0 \ast \forceQ'^1 \ast \forceQ^2 \ast \forceP$.

Note now that, as $S_{\alpha}$ is $\omega$-distributive, $2^{\omega} \cap W[\forceP] = 2^{\omega} \cap W[\forceP']$, as $S_{\alpha}$ is still a Suslin tree in $W[\forceP']$ by the fact that $\vec{S}$ is independent, and no factor of $\forceP'$ besides the trees from $\vec{S}$ used in $\forceP'$ destroys Suslin trees. But this implies that 
\[W[\forceP'] \models \lnot \exists r L[r] \models `` S_{\alpha} \text{ has an $\omega_1$-branch}" \]
as the existence of an $\omega_1$-branch through $S_{\alpha}$ in the inner model $L[r]$ would imply the existence of such a branch in $W[\forceP']$. Further
and as no  new reals appear when passing to $W[\forceP]$ we also get 
\[W[\forceP] \models \lnot \exists r L[r] \models `` S_{\alpha} \text{ has an $\omega_1$-branch}". \]

On the other hand any unwanted information, i.e. any $(x,m,k) \notin \{(x_{\beta}, m_{\beta},k_{\beta}) \, : \, \beta < \delta \}$ such that $W[G] \models (x,m,k) \in A$, will also witness that \[L[r] \models `` S_{\alpha} \text{ has an $\omega_1$-branch}"\] for unboundedly many $\alpha$'s which are not in any of the $h_{\beta}$'s from above.

Indeed, if $r$ witnesses $({\ast} {\ast} {\ast})$ for $(x,y,m)$, then there must also be an uncountable $M$, $r \in M$, $M \models \ZFP$, whose local version of $L[r]$ believes $(\ast)$ for every $\gamma \in h$, as otherwise we could find $r, x \in M_0 \prec M$, $M_0$ countable, and the transitive collapse $\bar{M}_0$ of $M_0$ is a counterexample to the truth of $({\ast} {\ast} {\ast})$, which is a contradiction.

If $M$ is an uncountable, transitive $\ZFP$ model as above, then
$L[r]^M \models ``S_{\alpha}$  has an $\omega_1$-branch$"$, and as the trees from $\vec{S}$ are $\Sigma_1(\omega_1)$-definable, and as the existence of an $\omega_1$-branch is again a $\Sigma_1(\omega_1)$-statement, we obtain by upwards absoluteness that $L[r] \models ``S_{\alpha}$  has an $\omega_1$-branch$"$, as claimed.

In particular, as $(x,m,k ) \in A$, $r$ will satisfy that
\[n \in (x,y,m) \rightarrow L[r] \models ``S_{\omega \gamma+2n+1} \text{ has an $\omega_1$-branch}" \]
and
\[ n \notin (x,y,m) \rightarrow L[r] \models ``S_{\omega \gamma+2n} \text{ has an $\omega_1$-branch}". \] for $\omega_1$-many $\gamma$'s.

But by the argument above, only trees which we used in one of the factors of $\forceP$ have this property, so there can not be unwanted codes.

%Hence any unwanted information must have an $\omega_1$-set of unwanted coded branches, so there is no real which got coded into $\vec{S}$ by accident when using $\forceP$.

\end{proof}

\subsection{An auxiliary result}
We proceed via proving first the following auxiliary theorem whose proof introduces some of the key ideas, and will serve as a simplified blueprint for the proof of the main results.
\begin{theorem}
There is a generic extension $L[G]$ of $L$ in which there is a real $R_0$ such that every pair of disjoint (ligthface) $\Sigma^1_3$-sets can be separated by a $\Delta^1_3(R_0)$-formula.
\end{theorem}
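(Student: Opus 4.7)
The plan is to construct $L[G]$ by a countable support iteration of length $\omega_1$ over $L$ combining the three coding devices of Section~2 — Jech's forcing $\forceP_J$, the club-shootings $\forceP_S$ for stationary-set coding, and the almost disjoint coding $\mathbb{A}_F$ — so that the final model carries a definable assignment, parametrised by a single countable ordinal $\alpha_0$, recording for every pair of disjoint lightface $\Sigma^1_3$-sets $(A,B)$ and every real $x$ which of the two sets (if any) contains $x$. Once such an assignment is in place, the separator for $(A,B)$ will be $\Sigma^1_3(\alpha_0)$ with a $\Sigma^1_3(\alpha_0)$-complement, hence $\Delta^1_3(\alpha_0)$.

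First I would fix a recursive enumeration $(\varphi_n,\psi_n)_{n<\omega}$ of all pairs of lightface $\Sigma^1_3$-formulas in one free real variable, a recursive assignment $(n,x,i)\mapsto\beta(n,x,i)$ (with $i\in\{0,1\}$) from triples to disjoint blocks of indices among the stationary sets $(S_\beta)$ of the coding lemma, and an $\alpha_0<\omega_1^L$ large enough that $L_{\alpha_0}$ computes this assignment and the bookkeeping for the iteration. The iteration $\forceP=(\forceP_\xi,\dot{\forceQ}_\xi)_{\xi<\omega_1}$ is then built by a standard book-keeping which at stage $\xi$ selects a triple $(n_\xi,\dot{x}_\xi,d_\xi)$ and performs a composite step: (i) a club-shooting through $\omega_1\setminus S_{\beta(n_\xi,\dot{x}_\xi,d_\xi)}$ just in case the relevant $\Sigma^1_3$-formula ($\varphi_{n_\xi}$ if $d_\xi=0$, $\psi_{n_\xi}$ if $d_\xi=1$) is \emph{forced} to hold at $\dot{x}_\xi$ by the already-determined part of the iteration; (ii) a Jech step to refresh the independent family of Suslin trees supplied by Lemma~\ref{ManySuslinTrees}, whose subsequent fate makes the stationarity pattern legible by a $\Sigma^1_2$-probe; (iii) an almost disjoint coding step relative to $F$ consolidating the finite pattern added at this stage into a single generic real. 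Stationarity of every $S_\beta$ not explicitly killed, and Suslinity of every tree not explicitly targeted, persists throughout by the preservation results of Section~2.

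Given a disjoint $\Sigma^1_3$-pair $(A_n,B_n)$ in $L[G]$ defined by $(\varphi_n,\psi_n)$, the separator is declared to be
\[
C_n \;=\; \{x\in 2^\omega : \exists\,\eta<\omega_1\;\; L_\eta[\alpha_0,x]\models\text{``$S_{\beta(n,x,0)}$ is nonstationary''}\}.
\]
Coding $\eta$ by a real reduces the outer quantifier to a real existential, and the model-theoretic clause is $\Delta^1_2(\alpha_0,x)$ by absoluteness of the $L$-construction, so $C_n$ is $\Sigma^1_3(\alpha_0)$. The analogous formula using $S_{\beta(n,x,1)}$ defines a $\Sigma^1_3(\alpha_0)$-set $D_n$. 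Book-keeping ensures that when $A_n\cap B_n=\emptyset$, exactly one of $C_n,D_n$ contains each $x\in 2^\omega$, so $D_n = 2^\omega\setminus C_n$, whence $C_n\in\Delta^1_3(\alpha_0)$; moreover $A_n\subseteq C_n$ and $B_n\cap C_n=\emptyset$.

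The main obstacle is the self-referential character of step~(i): whether to activate the coding at stage $\xi$ depends on a $\Sigma^1_3$-fact about the final model $L[G]$ rather than about the intermediate extension $L[G\upharpoonright\xi]$. This is the characteristic difficulty in Harrington--Kanovei-style codings, and I would overcome it by a ``coding by design'' argument — exploiting upward $\Sigma^1_2$-absoluteness of the $\Pi^1_2$-matrix of each $\Sigma^1_3$-formula together with the preservation properties of the iteration — to show that any $\Sigma^1_3$-witness producible by the tail could already have been anticipated at stage $\xi$, so the decision made there is globally correct. Organising the book-keeping so that every triple $(n,x,i)$ is addressed cofinally often within $\omega_1$ stages, while keeping the entire scheme decodable from the single countable parameter $\alpha_0$, is where the genuine work will lie.
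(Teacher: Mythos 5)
Your overall architecture matches the paper's: an independent family of Suslin trees made definable through a fixed sequence of $L$-stationary sets, almost disjoint coding to push the pattern down to a real, a bookkeeping of length $\omega_1$ visiting every triple cofinally often, and separators defined by a $\Sigma^1_3$ formula together with a $\Sigma^1_3$ formula for the complement. But the proposal leaves the actual theorem unproved, because the two places where you defer to future work are exactly where the content lies, and the hints you give point in the wrong direction. First, your activation rule in step (i) --- code $x$ on the $\varphi_n$-side iff $\varphi_n(x)$ ``is forced to hold by the already-determined part of the iteration'' --- is not stable: a real $x$ with $\neg\varphi_n(x)\wedge\neg\psi_n(x)$ at stage $\xi$ can perfectly well enter $A_n$ or $B_n$ at a later stage, and your proposed fix (``any $\Sigma^1_3$-witness producible by the tail could already have been anticipated at stage $\xi$'') is false as stated and is not an argument. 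The paper's resolution is a concrete dichotomy on the class of \emph{legal} tail forcings: either some legal forcing forces $\exists z(\varphi_m(z)\wedge\varphi_k(z))$, in which case one simply uses it and discharges the pair forever by upward $\Sigma^1_3$-absoluteness; or no legal forcing does, in which case a product argument (if $\forceQ\Vdash\varphi_m(x)$ and a later legal $\forceP$ forced $\varphi_k(x)$, then $\forceQ\times\forceP$ would be legal and force non-disjointness, contradiction) shows the side chosen for $x$ is permanent, \emph{provided} all future factors avoid a reserved countable set of trees. Your scheme has no analogue of the reserved blocks or of the closure of the forcing class under products, and without them the stage-$\xi$ decision cannot be certified.

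Second, your parameter $\alpha_0$ does the wrong job. In the paper $\alpha_0$ is not an ordinal from which the bookkeeping is computable; it is the (a posteriori) stage beyond which all codes written into the tree sequences are \emph{intended}, needed because the forcings used to make non-disjoint pairs intersect unavoidably write accidental codes for other pairs, and this can happen once per pair but only boundedly often overall. Without this your claimed partition $D_n=2^\omega\setminus C_n$ (restricted to honest codes) does not follow. A smaller but real defect: the separating formula cannot be $\exists\eta\,\bigl(L_\eta[\alpha_0,x]\models\text{``}S_{\beta(n,x,0)}\text{ is nonstationary''}\bigr)$, since the generic clubs and branches witnessing nonstationarity are not elements of any $L_\eta[\alpha_0,x]$; one needs the form $\exists r\,\forall M\,(\,r\in M\wedge M\text{ countable transitive}\wedge\dots\rightarrow M\models\dots)$, where $r$ is the almost-disjointly coded real produced at the coding stage, and the $\forall M$ quantifier (together with David's trick) is what makes the complementary clause $\Sigma^1_3$ rather than $\Sigma^1_4$.
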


For its proof, we will use the two easily definable $\omega_1$-sequences of Suslin trees on $\omega_1$, $\vec{S}=\vec{S}^1 \cup \vec{S}^2$ and branch shooting forcings to create for every pair $(A_m,A_k)$ of disjoint $\Sigma^1_3$-definable sets of reals a $\Delta_3^1 (\alpha_0)$-definable separating set $D_{m,k} \supset A_m$.
Using a bookkeeping function we list all the triples $(x,m,k)$ where $x$ is a real and $m, k\in \omega$, and decide for every such triple whether we code it into $\vec{S^1}$ which is equivalent to put it into $D_{m,k}$ or code it into $\vec{S^2}$ which eventually should become the complement $D_{m,k}^c$. Using coding arguments the sets $D_{m,k}$ and its complement will be $\Sigma^1_3(R_0)$-definable. The fact that we have to decide at every stage where to put the current real $x$ before the iteration is actually finished seems to be somewhat daring as the evaluation of the $\Pi^1_3$ and $\Sigma^1_3$-sets vary as we generically enlarge our surrounding universe along the iteration.
Additionally one has to deal with possible degenerated cases which stem from a certain amount of self referentiality in the way we set up things. Indeed it could happen that forcing a triple $(x,m,k)$ into one side, $D_{m,k}$ say, could force simultaneously that $x$ will become a member of $A_k$ in the generic extension, thus preventing $D_{m,k}$ to actually separate $A_m$ and $A_k$.
 A careful case distinction will show that this problem can be overcome though.

\subsection{Definition of the iteration over $W$}

For $n \in \omega$ let \[\varphi_n(v_0)= \exists v_1 \psi_n(v_0,v_1)\]  be the $n$-th formula in an enumeration of the $\Sigma^1_3$-formulas with one free variable. Let \[A_n:= \{ x \in 2^{\omega} \, : \, \varphi_n(x) \text{ is true} \},\] so $A_n$ is the set of reals whose definition uses the $n$-th $\Sigma^1_3$-formula in our enumeration. We force with an $\omega_1$-length
mixed support iteration of legal forcings which all have size $\aleph_1$, and use 
a definable, surjective bookkeeping-function \[ F: \omega_1 \rightarrow \omega_1  \times \omega_1 \times \omega \times \omega\]  to determine the iteration. We demand that every $\alpha < \omega_1$ is always strictly bigger than the first projection of $F(\alpha)$. We also assume that every quadruple $(\beta, \gamma, m,k)$ in $\omega_1  \times \omega_1 \times \omega \times \omega$ is hit unboundedly often by $F$.

The purpose of $F$ is to list all triples of the form $(x,m,k)$, where $x$ is a real in some intermediate universe of our iteration and $m,k \in \omega$ corresponds to a pair $(\varphi_m,\varphi_k)$ of $\Sigma^1_3$-formulas. The iteration will be defined in such a way, that, at every stage $\beta$ of the iteration, whenever some triple $(x,m,k)$ is considered by $F$, we must decide immediately whether to code (a real coding) the triple $(x,m,k)$ somewhere into the $\vec{S^1}$ or $\vec{S^2}$-sequence. The set of codes written into $\vec{S}^1$ which contain $m,k$ will result in the $\Sigma^1_3$-set $D^1_{m,k}$ which is a supset of $A_m$, the set of codes containing $m,k$ which are written into $\vec{S^2}$ shall result in the $\Sigma^1_3$-supset $D^2_{m,k}$ of $A_k$. The real $R_0$ will be used to indicate, in a uniform way for all $m,k$, the set of $\aleph_1$-many $\omega$-blocks of $\vec{S}$, which represent insecure data  we should not use for our separating sets.
The reader should think of $R_0$ as an error term, modulo which the separating sets will work.
That is $R_0 \in 2^{\omega}$ is such that it codes $\aleph_1$-many $\omega$-blocks of $\vec{S}$, and $D^1_{m,k}$ and $D^2_{m,k}$ should be the set of $(x,m,k)$'s which are coded into $\vec{S}^1$ and $\vec{S}^2$ respectively whose coding areas are almost disjoint from the set of ordinals coded by $R_0$, in that their intersection is bounded below $\omega_1$.
 
 More precisely 
let \begin{align*}
 D^1_{m,k} (R_0):= \{ (x,m,k) \, : &\, x \in 2^{\omega} \land (x,m,k) \text{ is coded into } \vec{S}^1  \\&\text{and its coding area is almost disjoint} \\&\text{ from the indices coded by $R_0$} \}
\end{align*}
and let $D^2_{m,k}(R_0)$ be defined similarly.  Our goal is to have $D^1_{m,k}(R_0) \cap D^2_{m,k}(R_0) = \emptyset$ for every $m,k \in \omega$. Thus we have found our separating sets for $A_m$ and $A_k$.

We proceed with the details of the inductive construction of the forcing iteration.
Assume that we are at some stage $\alpha < \omega_1$ of our iteration, let $\forceP_{\alpha}$ denote the partial order we have defined so far, let $G_{\alpha}$ denote a generic filter for $\forceP_{\alpha}$. We inductively assume in addition, that we have created a $\forceP_{\alpha}$-name of a set $\dot{b}_{\alpha}$ which is forced to be a set of countably many $\omega$-blocks of $\vec{S^1}$ and $\vec{S^2}$. Our goal is to define the next forcing $\dot{\forceQ}_{\alpha}$ which we shall use.
As will become clear after finishing the definition of the iteration, we can assume that $\forceP_{\alpha}$ is a legal notion of forcing.
We look at the value $F(\alpha)$ and define the forcing $\dot{\forceQ}_{\alpha}$
according to $F(\alpha)$ by cases as follows.
\subsubsection{Case a}

For the first case we assume that $F(\alpha) = ( \beta, \gamma, m,k)$, and that 
the $\gamma$-th (in some wellorder of $W$) name of a real of $W^{\forceP_{\beta}}$ is $\dot{x}$. 
We ask, whether there exists a forcing $\forceP$ such that
\[W[G_{\alpha}] \models \forceP \text{ is legal and } \forceP \Vdash \exists z (\varphi_m(z) \land\varphi_k(z)) .\] 
If there is such a legal $\forceP$, then we use it, i.e. we fix the $<$-least such forcing and let $\forceP(\alpha):= \forceP$, let $\forceP_{\alpha+1}=\forceP_{\alpha} \ast \forceP(\alpha)$, and let $G(\alpha+1)$ be $\forceP(\alpha+1)$-generic over $W$.

\subsubsection{Case b}
We assume again that $F(\alpha) = ( \beta, \gamma, m,k)$, and that 
the $\gamma$-th (in some wellorder of $W$) name of a real of $W^{\forceP_{\beta}}$ is $\dot{x}$. Let $\dot{x}^{G_{\alpha}}=x$.
Now we assume that case a is wrong, i.e. in $W[G_{\alpha}]$, there is no legal $\forceP$ such that $\forceP \Vdash \exists z (\varphi_m(z) \land \varphi_k(z) )$.
We shall distinguish three sub-cases.

\begin{itemize}
\item[(i)] First assume that there is a legal forcing $\forceQ$ such that
\begin{align*}
W[G_{\alpha}] \models &\, \forceQ \Vdash \varphi_m(x) 
\end{align*}
In this situation, we will code $(x,m,k)$ into the $\vec{S}^1$-sequence, i.e. we
let \[ \forceP(\alpha):= \hbox{Code} ((x,m,k),1) \]
and set $\forceP_{\alpha+1}:=\forceP_{\alpha} \ast \forceP(\alpha)$.
The upshot of the arguments above is the following:

 \begin{flushleft}
\textbf{Claim:} Let $G_{\alpha+1}$ be a $\forceP_{\alpha+1}$-generic filter over $W$ and let $\forceP$ be an arbitrary legal forcing in $W[G_{\alpha+1}]$. Then
\end{flushleft}
\[ \cancel{\Vdash}_{\forceP} \, x \in A_k.\]
\begin{proof}
Indeed if not, then pick $\forceP \in W[G_{\alpha+1}]$ such that there is a $p \in \forceP$ such that $p \Vdash_{\forceP} x \in A_k$. If we consider $\forceP$ below the condition $p$, we obtain a legal forcing $\forceP_{ \le p}$ again, and 
$\forceQ \times \forceP_{\le p} \Vdash x \in A_m  \cap A_k$, because $\forceQ$ introduces a real $r_m$ which witnesses that $\varphi_m(x)$ holds true. In particular $\varphi_m(x)$ is true in all outer models of $W[G_{\alpha+1}] [\forceQ]$ by upwards absoluteness of $\Sigma^1_3$-statements, which follows from Shoenfield absoluteness.

Likewise, $\forceP_{\le p}$ shows that $W[G_{\alpha+1}] [\forceP_{ \le p} \models \varphi_k (x)$, thus $\forceQ \times \forceP_{\le p}$ is a legal forcing which forces $x \in A_m \cap A_k$ which is a contradiction.
\end{proof}
\item[(ii)] The second subcase is symmetric to the first one. We assume that there is no legal forcing $\forceQ$, for which $\Vdash_{\forceQ} \varphi_m(x)$ is true, but there is a legal $\forceQ$ for which it is true that
\begin{align*}
W[G_{\alpha}] \models 
\forceQ \Vdash  \varphi_k(x). 
\end{align*}
Then we code $x$ into the $\vec{S^2}$-sequence with the usual coding. Note that by the symmetric argument from above, no further legal extension will ever satisfy that $x \in A_m$.
\item[(iii)] In the final subcase, there is no legal forcing which forces $x \in A_m \cup A_k$, and we are free to choose where to code $x$.
In that situation we settle to  code $(x,m,k)$ into $\vec{S^1}$.
\end{itemize}
This ends the inductive definition of the iteration.

\subsubsection{Discussion of case b}
We pause here to discuss briefly the crucial case b of the iteration. At first glance it seems promising, when in the first subcase of case b of the iteration, to use the legal forcing $\forceQ$, granted to exist by assumption, in order to obtain $x \in A_m$. After all, we know that case a is not true here, so if we can force $x \in A_m$ with a legal forcing, we can conclude that for all further future legal extensions, $x$ will not belong to $A_k$ which seems to fully settle the problem of where to place the particular triple $(x,m,k)$.

The just described strategy will fail however, for reasons having to do with the already mentioned self-referentiality of the the set-up. Indeed, one can easily produce $\Sigma^1_3$-predicates $\varphi_m$ and $\varphi_k$ such that case b will apply for all reals $x$, and such that whenever we decide to code $(x,m,k)$ into $\vec{S^1}$, in the resulting generic extension $\varphi_k(x)$ will become true. And vice versa, whenever we decide to code $(x,m,k)$ into $\vec{S^2}$, $\varphi_m(x)$ will hold in the resulting generic extension.
Thus, for these particular $m,k$, we are in case b throughout the iteration, and find a legal $\forceQ$ for every real $x$ we encounter, which forces $\forceQ \Vdash x \in A_k(x)$. But the forcings $\forceQ$, when applied, always add  pathological and unwanted situations, namely  $\forceQ \Vdash x \in A_k(x)$, yet $x$ is coded into $\vec{S^1}$. And as we have to place all the reals, we will produce these problems cofinally often throughout the whole iteration which ruins our attempts to proof the theorem.

Our definition of the iteration circumvents these problems via noting that the possibility to actually use the forcing $\forceQ$ is sufficient to rule out a pathological situation, by the closure of legal forcings under products. Thus the mere existence of such a legal $\forceQ$ is sufficient to not run into any problems when coding $(x,m,k)$ into $\vec{S}^1$.
We emphasize that this line of reasoning takes advantage of the specific coding method we decided to use and justifies the  construction of our ground model $W$.

\subsection{Discussion of the resulting W[G]}
We let $G$ be a generic filter for the $\omega_1$-length iteration which we just described using mixed support.
First we note that the iteration is proper, hence the iteration preserves $\aleph_1$. Consequently there will be no new reals added at stage $\omega_1$, so $\omega^{\omega} \cap W[G] = \bigcup_{\alpha< \omega_1} \omega^{\omega} \cap W[G_{\alpha}]$,  in particular $\CH$ is true in $W[G]$.

A second useful observation is that for every pair of stages $\alpha <\beta < \omega_1$, the quotient-forcing which we use to pass from $\forceP_{\alpha}$ to $\forceP_{\beta}$ is a legal forcing as seen from the intermediate model $W[\forceP\alpha]$.  

Our goal is now to define a real $R_0$ and, given a pair of disjoint $\Sigma^1_3$-definable sets $A_m,A_k$, a $\Delta^1_3(R_0)$-definable separating set, i.e. a set such that $A_m \subset D^1_{m,k}$ and $A_k \subset D_{m,k}^2$ and such that $D_{m,k}^2=D_{m,k}^c$.
We want our set $D^1_{m,k} (R_0)$ to consist of the codes written into $\vec{S}^1$ beyond $\alpha_0$ which itself contain a code for the pair $(m,k)$ and its converse $D_{m,k}^2$ to consist of all the codes on the $\vec{S}^2$-side  which contain a code for $(m,k)$ and whose coding areas are almost disjoint from the subset of $\omega_1$ coded by the real $R_0$. What should the real $R_0$ be?
It is clear from the definition of the iteration $\forceP$, that there are stages in the iteration where case a applies.  There, we just blindly use legal forcings. In particular, nothing prevents these legal forcings to code up $(x,m,k)$ into, say $\vec{S^1}$ while $\varphi_k(x)$ is true, thus adding a problem.

Note however that such degenerate situations can only happen once for every pair $(A_m,A_k)$. As we only have countably many such pairs and as our iteration has length $\omega_1$ and as we visit every triple $(x,m,k)$ uncountably often with our bookkeeping function, there will be a stage $\beta_0 < \omega_1$ such that from $\beta_0$ on all the codes we have written into $\vec{S}^1$ and $\vec{S}^2$ are intended ones, i.e. the codes really define a separating set $D_{m,k}$ for $A_m$ and $A_k$.

Thus, in order to define $R_0$, we first let $\beta_0 < \omega_1$ be the last stage in the iteration $\forceP$, where case a is applied. Then, working in $W[G_{\beta_0}]$, we let $R_0$ code the collection of all indices of all the trees from $\vec{S^1}$ and $\vec{S^2}$, which were used for coding in $W[G_{\beta_0}]$. Note here that this collection is characterized by the countable set $\{ r_{\beta} \, : \, \beta < \beta_0\}$ where $r_{\beta}$ is the real which is added with almost disjoint coding at stage $\beta$ and which witnesses $( {\ast} {\ast} {\ast} )$ for $(x_{\beta},m_{\beta},k_{\beta})$ and each $\gamma \in h_{\beta}$ (where $h_{\beta}$ just denotes the coding area given by the real $r_{\beta}$) holds. This countable set of reals can itself be coded by a real, and this real is $R_0$.

We define:
\begin{align*}
 x \in D_{m,k}^1(R_0) \Leftrightarrow &\exists r \in 2^{\omega} L[r] \models  ``\exists M  (\text{$M$ witnesses that }(\ast) \text{ holds for $(x,m,k)$,  $\vec{S}^1$}\\ & \text{and every $\gamma$ in its coding area $h \subset \omega_1"$}. 
  \\& \text{ Further } L[r,R_0] \models ``h  \text{ is almost disjoint} \\&  \qquad \qquad \text{ from the set of indices coded by $R_0"$ )}
\end{align*}
and
\begin{align*}
  x \in D_{m,k}^2(R_0) \Leftrightarrow &\exists r \in 2^{\omega} L[r] \models  ``\exists M  (\text{$M$ witnesses that }(\ast) \text{ holds for $(x,m,k)$,  $\vec{S}^2$}\\ & \text{and every $\gamma$ in its coding area $h \subset \omega_1"$}. 
  \\& \text{ Further } L[r,R_0] \models ``h  \text{ is almost disjoint} \\&  \qquad \qquad \text{ from the set of indices coded by $R_0"$ )}
\end{align*}
We shall show now that these sets work as intended.
\begin{lemma}
In $W[G]$ for every pair $m \ne k \in {\omega}$, $D^1_{m,k}(R_0)$ and $D_{m,k}^2(R_0)$ union up to all the reals.
\end{lemma}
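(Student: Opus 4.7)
The plan is to fix an arbitrary real $x\in L[G]$ together with G\"odel codes $m\ne k$ of disjoint $\Sigma^1_3$-sets $A_m, A_k$, and to locate a stage of the iteration at which the bookkeeping forces $x$ to be coded into either $\vec{S}^1$ or $\vec{S}^2$. Since the iteration has the ccc, $x$ appears at some countable stage, so there exist $\beta, \gamma<\omega_1$ with $\dot{x}$ the $\gamma$-th $\forceP_\beta$-name and $x=\dot{x}^{G_\beta}$. Using that the bookkeeping $F$ hits every quadruple in $\omega_1\times\omega_1\times\omega\times\omega$ unboundedly often, I choose an $\alpha>\alpha_0$ with $F(\alpha)=(\beta,\gamma,m,k)$.

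The heart of the argument is the following observation: when $A_m$ and $A_k$ are disjoint in $L[G]$, none of the cases c, d, e can ever be triggered at any stage for the pair $(m,k)$. Each of those cases explicitly adjoins, via a legal forcing, a real witnessing $x'\in A_m\cap A_k$ for the real $x'$ then being processed; by upward absoluteness of $\Sigma^1_3$-statements this intersection would persist to $L[G]$, contradicting disjointness. Consequently, at the selected stage $\alpha$, only cases a, b, or f can fire.

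I would then split into three subcases according to whether $W[G_\alpha]\models\varphi_m(x)$, $\varphi_k(x)$, or neither. In the first subcase the hypothesis of case a that no legal forcing makes $\varphi_k(x)$ true is automatic, since its failure would place us in the just-excluded case c; hence case a fires and $(x,m,k)$ is coded into the first free $\omega$-block of $\vec{S}^1$. The second subcase is symmetric and yields a coding into $\vec{S}^2$ via case b. In the third subcase case e is likewise ruled out, so case f applies; a direct inspection of its three subcases shows that in every one $x$ is coded into either $\vec{S}^1$ or $\vec{S}^2$. Since $\alpha>\alpha_0$, the coding block used at stage $\alpha$ directly witnesses $x\in D^1_{m,k}(\alpha_0)\cup D^2_{m,k}(\alpha_0)$ by the defining formulas of these sets.

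The main obstacle I anticipate is the careful justification that cases c, d, e really do produce a genuine element of $A_m\cap A_k$ in $L[G]$: one must verify both that the specific legal forcing used in the construction forces its intended $\Sigma^1_3$-witness, and that upward absoluteness of $\Sigma^1_3$ transports the resulting intersection through the remaining legal part of the iteration to the final model $L[G]$. Once this is cleanly established, the rest is a routine case analysis of the definition of $\dot{\forceQ}_\alpha$ driven by the bookkeeping.
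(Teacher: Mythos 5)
Your proof is correct and is exactly the argument the paper intends: the paper's own proof of this lemma is just ``Immediate from the definitions,'' and your case analysis (locate a stage $\alpha>\alpha_0$ where the bookkeeping handles $(x,m,k)$, use disjointness plus upward $\Sigma^1_3$-absoluteness to exclude cases c, d, e, and observe that cases a, b and every subcase of f code $x$ into $\vec{S}^1$ or $\vec{S}^2$) is precisely the reasoning the paper spells out for the subsequent separation lemma. The only cosmetic discrepancy is that the lemma as stated quantifies over all pairs $m\ne k$ while your argument (like the paper's surrounding discussion) uses disjointness of $A_m$ and $A_k$ to exclude cases c--e; this restriction is the sensible reading and is all that is needed for the main theorem.
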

\begin{proof}
Immediate from the definitions.
\end{proof}

\begin{lemma}
In $W[G]$ for every pair $(m,k)$, if the $\Sigma^1_3$-sets $A_m$ and $A_k$ are disjoint then $D^1_{m,k}(R_0)$  separates $A_m$ from $A_k$, i.e. $A_m \subset D^1_{m,k}(R_0)$ and $A_k \cap D^1_{m,k}(R_0)=\emptyset$. Likewise $D^2_{m,k}$ separates $A_k$ from $A_m$. Consequentially, for every $m,k$ such that $A_m \cap A_k = \emptyset$, $D^1_{m,k}(R_0) \cap D^2_{m,k}(R_0)=\emptyset$.
\end{lemma}
 \begin{proof}
We will only proof the first assertion, the second one is proved exactly as the first one with the roles of $m,k$ switched. Assume that $A_m$ and $A_k$ are disjoint and let $x \in W[G] \cap 2^{\omega}$ be arbitrary, such that $x \in A_m$ is coded into $\vec{S^1}$ and its coding area is almost disjoint from the set of ordinals coded by $R_0$.
 There is a least stage $\alpha$ with $\beta_0<\alpha< \omega_1$ such that $F(\alpha)= (\dot{x},m,k)$ where $\dot{x}$ is a name for $x$. According to the definition of the iteration and the assumption that $A_m \cap A_k=\emptyset$, we can rule out case a. Thus case b remains, and hence the first or the third subcase did apply at stage $\alpha$.  Suppose, without loss of generality, that we were in the first subcase of case b. 
Assume for a contradiction that in $W[G]$, $x \in A_k$, then there would be a stage $\alpha'$ of the iteration, $\alpha'> \alpha> \beta_0$ such that  $W[G_{\alpha'}] \models x \in A_k$ and the part of the iteration $\forceP$ between stage $\alpha$ and $\alpha'$, denoted with $\forceP_{\alpha \alpha'}$, is a legal forcing, and which forces $x \in A_k$. But, as at stage $\alpha$, the first subcase of b applied, there is a legal forcing $\forceQ$, such that $\forceQ \Vdash x \in A_m$, hence, at $\alpha$, there is a legal forcing 
 which forces $x \in A_m \cap A_k$, namely $\forceP_{\alpha \alpha'} \times \forceQ$, which is a contradiction.

 \end{proof}

\begin{lemma}\label{Sigma13}
In $W[G]$, for every $m,k \in \omega$, $D^1_{m,k}$ and $D^2_{m,k}$ are $\Sigma^1_3(R_0)$-definable. Thus $W[G]$ satisfies that every pair of disjoint $\Sigma^1_3$-sets can be separated by a $\Delta^1_3 (R_0)$-set.
\end{lemma}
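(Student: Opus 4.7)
My plan is to exhibit an explicit $\Sigma^1_3(\alpha_0)$-formula defining $D^1_{m,k}(\alpha_0)$ and a symmetric one for $D^2_{m,k}(\alpha_0)$; then combine these with the preceding partition lemma (that the two sets cover $\omega^\omega$) and the separation lemma (that when $A_m \cap A_k = \emptyset$ they are disjoint) to deduce that each is the complement of the other in $W[G]$, hence each is $\Delta^1_3(\alpha_0)$.

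For the $\Sigma^1_3(\alpha_0)$-definition of $D^1_{m,k}(\alpha_0)$, I mirror the David's-trick apparatus of case~a. Whenever a real $x$ was written into $\vec{S}^1$ at a stage $\alpha$ with marker $(m,k)$, the almost disjoint coding $\mathbb{A}_F(Y_\alpha)$ produced a real $r_\alpha$ enjoying $({\ast}{\ast}{\ast}{\ast})$: any countable transitive $M \models \ZFP$ with $r_\alpha \in M$ and $\omega_1^M=(\omega_1^L)^M$ can construct $L[r_\alpha]^M$, inside which there is an ordinal $\xi > \alpha_0$ and an $\omega$-block of trees from $\vec{S}^1$ starting at $\xi$ whose Suslin-versus-non-Suslin pattern decodes to $(x,m,k)$. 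Thus I would set $x \in D^1_{m,k}(\alpha_0)$ to mean: there exists a real $r$ such that for every real $y$ coding a countable well-founded model $M$ of $\ZFP$ with $r, x, \alpha_0 \in M$ and $\omega_1^M=(\omega_1^L)^M$, the model $M$'s internal $L[r]$ witnesses the existence of such an $\omega$-block past $\alpha_0$ in $\vec{S}^1$ decoding to $(x,m,k)$. The $\exists r$ quantifier is $\Sigma^1_2$; the $\forall y$ quantifier combined with the $\Pi^1_1$ well-foundedness clause and arithmetic matrix is $\Pi^1_2$; the whole compound is therefore $\Sigma^1_3$ in the parameter $\alpha_0$. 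Substituting $\vec{S}^2$ everywhere gives the analogous $\Sigma^1_3(\alpha_0)$-formula for $D^2_{m,k}(\alpha_0)$.

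Combining with the two preceding lemmas, $D^2_{m,k}(\alpha_0) = \omega^\omega \setminus D^1_{m,k}(\alpha_0)$ in $W[G]$, so $D^1_{m,k}(\alpha_0)$ is both $\Sigma^1_3$ and $\Pi^1_3$ in the parameter $\alpha_0$, i.e.\ $\Delta^1_3(\alpha_0)$, and it separates $A_m$ from $A_k$ as required.

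The delicate point is the soundness of the local decoding: every countable $M \ni r_\alpha$ must reconstruct exactly the correct Suslin-or-not status of each tree $S^1_{\omega\cdot\xi+n}$, and no parasitic $\omega$-block past $\alpha_0$ may decode to a triple $(x,m,k)$ that we never intended to write. The first part is built into the definition of $Y_\alpha$ via David's trick together with the preparatory forcings $\forceR_0,\forceR_1,\forceR_2$ which made $\vec{S}^1$ definably readable from countable models satisfying the correct $\omega_1$-condition. The second is precisely why $\alpha_0$ is chosen past the countably many potentially-degenerate stages generated by cases c, d, e and by the stabilisation phase of case f, as in the remark preceding the definition of $D^1_{m,k}(\alpha_0)$; this guarantees every block decoded past $\alpha_0$ corresponds to an intended coding action, and so the $\Sigma^1_3(\alpha_0)$-formula above indeed picks out the set we want.
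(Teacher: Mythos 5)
Your proposal matches the paper's own proof in all essentials: the same $\exists r\,\forall M$ formula quantifying over countable well-founded models with the correct $\omega_1^L$, the same quantifier count $\exists\forall(\Sigma^1_2\rightarrow\Delta^1_2)$ giving $\Sigma^1_3(\alpha_0)$, the real $r_\alpha$ from property $({\ast}{\ast}{\ast}{\ast})$ witnessing the forward direction, the choice of $\alpha_0$ past all degenerate stages ruling out parasitic codes, and the combination with the two preceding lemmas to get $\Delta^1_3(\alpha_0)$. The one step the paper makes explicit that you leave implicit is the converse direction's lift from ``all countable transitive models see the pattern'' to ``the pattern holds in $W[G]$'' (argued there by taking a countable elementary submodel of any purported large counterexample and then reflecting), but you correctly locate this in the David's-trick apparatus, so the argument is sound.
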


\begin{proof}
 We claim that for $m,k \in \omega \times \omega$ arbitrary, $D^1_{m,k}$ and $D^2_{m,k}$ have the following definitions in $W[G]$:
 
 \begin{align*}
 x \in D_{m,k}^1 \Leftrightarrow & \exists r \forall M (r, R_0 \in M \land \omega_1^M=(\omega_1^L)^M \land M \text{ transitive } \rightarrow  \\ &M \models L[r] \models ``\exists N ( N \models \ZFP \land |N| = \aleph_1^M  \land N \text{ is transitive } \land \\ &N \text{ believes $(\ast)$ for $(x,y,m)$ and $\vec{S}^1$ and every $\gamma \in h"$} \\&\text{ and $L[r,R_0] \models``$the coding area $h$ of $(x,m,k)$ is almost disjoint} \\&
 \text{from the set of indices coded by $R_0"$} )).
\end{align*}
and
\begin{align*}
x \in D_{m,k}^2 \Leftrightarrow & \exists r \forall M (r, R_0 \in M \land \omega_1^M=(\omega_1^L)^M \land M \text{ transitive } \rightarrow  \\ &M \models L[r] \models ``\exists N ( N \models \ZFP \land |N| = \aleph_1^M  \land N \text{ is transitive } \land \\ &N \text{ believes $(\ast)$ for $(x,y,m)$ and $\vec{S}^2$ and every $\gamma \in h"$} \\&\text{ and $L[r,R_0] \models``$the coding area $h$ of $(x,m,k)$ is almost disjoint} \\&
 \text{from the set of indices coded by $R_0"$} )).
\end{align*}
 
Counting quantifiers yields that both formulas are of the form $\exists \forall (\Sigma^1_2 \rightarrow \Delta^1_2)$ and hence $\Sigma^1_3$.

We will only show the result for $D_{m,k}^1$.
To show the direction from left to right, note that if $x \in D_{m,k}^1$, then 
there was a stage $\alpha>\beta_0$ in our iteration such that we coded $x$ into the $\vec{S}^1$-sequence. In particular we added a real $r_{\alpha}$ for which property $({\ast}{\ast}{\ast} )$ is true, hence $r_{\alpha}$ witnesses that the right hand side is true in $W[G]$.

For the other direction assume that the right hand side is true. This in particular means that the assertion is true for transitive models containing $r$ of arbitrary size. Indeed if there would be a transitive $M$ which contains $r$ and whose size is $\ge \aleph_1$, then there would be a countable $M_0 \prec M$ which contains $r$. The transitive collapse of $M_0$ would form counterexample to the assertion of the right hand side, which is a contradiction to our assumption.

But if the right hand side is true for models of arbitrary size, by reflection it must be true for $W[G]$ itself, hus $x \in D_{m,k}^1$, and we are done. 

\end{proof}

\section{Boldface Separation}
\subsection{Preliminary Considerations}
We turn our attention to boldface separation. Goal of this section is to prove the first main theorem.
\begin{theorem}
There is an $\omega_1$-preserving, generic extension of $L$ in which every pair of disjoint $\bf{\Sigma^1_3}$-sets $A_m$ and $A_k$ can be separated by a $\bf{\Delta^1_3}$-set.
\end{theorem}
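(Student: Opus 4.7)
The plan is to adapt the construction of Theorem 3.1 to carry a real parameter $z$. I would keep the same ground model $W$ of Section 3.2, with its two independent $\omega_1$-sequences $\vec{S}^1,\vec{S}^2$ of Suslin trees, and run an $\omega_1$-length finite support iteration of legal forcings above $W$. The bookkeeping would now deliver, at each stage $\alpha<\omega_1$, a quintuple $(\beta,\gamma,\delta,m,k)$ interpreted as two $\forceP_\beta$-names $\dot{x},\dot{z}$ for reals together with codes $m,k$ of $\Sigma^1_3$-formulas; the six cases a--f would be reformulated with respect to the parametric formulas $\varphi_m(v,z)$ and $\varphi_k(v,z)$, and whenever a code must be written we would code the full quadruple $(x,m,k,z)$ into a fresh $\omega$-block of $\vec{S}^1$ or $\vec{S}^2$ via the machinery of Section 3.3.

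For a triple $(m,k,z)$ such that the $\bf{\Sigma^1_3}$-sets $A_m(z)$ and $A_k(z)$ are disjoint, the separating set would be defined, in analogy with Section 3.4, by
\[ D_{m,k,z}^{1}(\alpha_0) := \{\, x : \exists\,\alpha>\alpha_0 \text{ an } \omega\text{-block of } \vec{S}^1 \text{ starting at } \alpha \text{ codes } (x,m,k,z)\,\}, \]
with $D_{m,k,z}^{2}$ defined analogously via $\vec{S}^2$. Its $\bf{\Sigma^1_3}$-definability from the real parameters $z$ and $r_{\alpha_0}$, where $r_{\alpha_0}$ is any real coding the countable ordinal $\alpha_0$, should follow by the same reflection/absoluteness argument as in Lemma \ref{Sigma13}, since the only new ingredient in the definition is the inclusion of $z$ inside the coded quadruple; the complement is handled symmetrically, yielding a $\bf{\Delta^1_3}$ separator.

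The main obstacle is to ensure that for every real $z$ there is a countable ordinal $\alpha_0(z)$ beyond which no unintended code for any triple $(m,k,z)$ is produced. In the lightface setting each of the countably many pairs $(m,k)$ contributes at most one bad stage via cases c, d or e, so the supremum is automatically countable; in the boldface setting a legal witness-forcing $\forceP$ used for some triple $(m',k',z')$ might in principle insert a fake code for a different triple $(m,k,z)$ with $z\neq z'$, and a naive count would give as many as $\aleph_1$ bad stages. To circumvent this I would strengthen the notion of legal forcing by fixing once and for all a definable partition of $\vec{S}^1\cup\vec{S}^2$ into $\omega_1$-many regions and associating to each real $z$ a region $R(z)$ via the definable wellorder inherited from $W$; any legal witnessing forcing used in cases c--e for a triple involving $z'$ would be required to confine its Suslin tree activity to $R(z')$, so that fake codes are always localised to the region of the triggering parameter. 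With this refinement, for a fixed $z$ only the countably many triples $(m,k,z)$ processed through cases c--e can leave fake codes in $R(z)$, giving $\alpha_0(z)<\omega_1$; the real parameter $(z,r_{\alpha_0(z)})$ then witnesses the required $\bf{\Delta^1_3}$-separator.
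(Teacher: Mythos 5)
You have correctly isolated the central difficulty of the boldface case: a witness forcing used in cases c--e for a triple with parameter $z'$ can deposit fake codes relevant to a different triple $(m,k,z)$, and over an $\omega_1$-length iteration this naively produces uncountably many bad stages for a fixed $(m,k,z)$. However, your proposed fix --- quarantining each witness forcing's Suslin-tree activity to a region $R(z')$ of a fixed partition of $\vec{S}$ --- does not actually neutralize the fake codes. A fake code identifies itself by its \emph{content}: it is a pattern in some $\omega$-block, made locally verifiable by an almost-disjoint-coded real, that spells out the quadruple $(x,m,k,z)$. Your displayed definition of $D^{1}_{m,k,z}(\alpha_0)$ quantifies over \emph{all} $\omega$-blocks of $\vec{S}^1$ above $\alpha_0$, so a fake code for $(x,m,k,z)$ sitting inside $R(z')$ is still picked up. Localization only helps if the definition of $D^{1}_{m,k,z}$ is itself restricted to blocks in $R(z)$, and for that you would need the map $z\mapsto R(z)$ to be $\bf{\Sigma^1_3}$-recognizable from $z$ (plus a countable ordinal). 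Since the assignment is made via the order in which reals appear along the generic iteration, this is exactly the kind of information that is not available at the required level of definability, and you neither restrict the definition nor supply such a definable assignment. A secondary gap: restricting witness forcings to $R(z')$ changes the meaning of ``there is a legal forcing forcing $\varphi_k(x,z')$'', and the product argument in case f now requires that the hypothetical witness $\forceQ$ and all later forcings acting inside the \emph{same} region $R(z)$ avoid each other's trees; this per-region block bookkeeping across $\aleph_1$ stages is not addressed.

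The paper resolves the interference problem by the opposite strategy: instead of quarantining the noise spatially, it makes the noise harmless by content. At the first stage a disjoint, non-intersectable pair $(\varphi_m(\cdot,y_1),\varphi_k(\cdot,y_2))$ is encountered, one defines an \emph{assignment function} $g_{m,k,y_1,y_2}$ that decides, uniformly for every real $x$ in every legal extension, on which side $(x,y_1,y_2,m,k)$ must be coded (together with a countable set of blocks to preserve), and then strengthens the notion of legality ($\beta$-legal, for an increasing ordinal parameter $\beta$) so that \emph{every} subsequent placement of a code for that quadruple --- intended or incidental --- respects $g_{m,k,y_1,y_2}$. Then every code appearing after the defining stage $\delta_\beta$ is automatically a correct one, and $\delta_\beta$ serves as the countable parameter. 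If you want to salvage your approach, you would either need to make the regions definable inside the $\Sigma^1_3$ reading-off formula, or replace the regions by such assignment functions; the latter is essentially forced on you by the self-identifying nature of the codes.
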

It uses the proof of our auxiliary theorem as the base case of an inductive construction. The main idea to keep control is to replace the notion of legal forcing with a dynamic variant which keeps changing along the iteration.

To motivate the following we first consider a more fine-tuned approach to the definition of the iteration of the proof of the last theorem. Let us assume that $(m,k)$ is the first pair such that case b in the definition of the iteration applies. Recall that in the discussion of case b, we showed that, given a pair $(A_m, A_k)$ of $\Sigma^1_3$-sets for which there does not exist a legal forcing $\forceQ$ such that $\Vdash_{\forceQ} \exists z (\varphi_m(z) \land \varphi_k(z))$ becomes true, we can assign for an arbitrary real $x$ always a side $\vec{S}^1$ or $\vec{S}^2$ such that in all future legal extensions, there will never occur a pathological situation, i.e. from that stage on we never run into the problem of having coded the triple $(x,m,k)$ into, say, $\vec{S^1}$, yet $\varphi_k(x)$ becomes true in some future extension of our iteration (or vice versa). Note here that the arguments in the discussion of case b were uniform for all reals $x$ which appear in a legal extension. So it is reasonable to define for the pair $(m,k)$ 
a stronger notion of legality, called 1-legal with respect to $(0,m,k)$ (the 0 indicates the base case of an inductive construction we define later) as follows:

Let $F: \gamma \rightarrow H(\omega_2)^4$ be a bookkeeping function and let $E:=\{ (0,m,k) \}$. We let $\forceP$ be a mixed support iteration of length $ \gamma$. Then we say that $(\forceP  \,  : \, \beta < \gamma\} ) $ is 1-legal with respect to $E$ and $F$ if
\begin{itemize}
\item $\forceP$ is a legal forcing relative to $F$.
\item Whenever $\beta < \gamma$ is a stage such that $F(\beta)=(\dot{x},m,k,i)$, where $\dot{x}$ is a $\forceP_{\beta}$-name of a real, $\xi$ is an ordinal and $i \in \{1,2 \}$ we split into three subcases:
\begin{itemize}
\item[(i)] First we assume that in $W[G_{\beta}]$,
there is a legal forcing $\forceQ$ such that $\forceQ \Vdash x (=\dot{x}^{G_{\beta}}) \in A_m$.

Then, the $\beta$-th forcing of $\forceP$, $\forceP(\beta)$ must be $\hbox{Code} ((x,m,k), 1)$.
\item[(ii)]
Assume that (i) is wrong but the dual situation is true for $A_k$. That is, 
there is a legal forcing $\forceQ$ such that $\forceQ \Vdash x (=\dot{x}^{G_{\beta}}) \in A_k$.

Then, the $\beta$-th forcing of $\forceP$, $\forceP(\beta)$ must be $\hbox{Code} ((x,m,k), 2)$.
\item[(iii)]

In the third case  we assume that neither (i) nor (ii) is true.
In that situation we force with either coding $x$ into the $A_m$ or the $A_k$ side, whatever the bookkeeping tells us.

%If neither (i) nor (ii) is true, then 

%$\forceP(\beta)=\hbox{Code} ((x,m,k),i,\gamma')$ for $i=F(\beta)_5$ and $\gamma'=F(\beta)_4$. We also demand that $\dot{B}(\beta+1) = \dot{B} (\beta) \cup F(\beta)_3$, provided $F(\beta)_3$ is a $\forceP_{\beta+1}$-name of a countable subset of $\{ 1,2 \} \times \omega_1$.
\end{itemize}

If the iteration  $(\forceP \, : \, \beta < \gamma)$ obeys the above rules, then we say that it is 1-legal with respect to $E$ and bookkeeping function $F$.
\end{itemize}

From earlier considerations it is clear that if we drop the notion of legal from now on and replace it with 1-legal relative to $E=\{ (0,m,k)\}$ in our iteration,  we can ensure that, at least for the pair $(m,k)$ no new pathological situations will arise anymore.
This process can be iterated. Assume that we run into a new pair $(m',k')$ where the modified case b applies, i.e. there is no 1-legal forcing which forces $\exists z (\varphi_{m'}(z) \land\varphi_{k'} (z))$, we can introduce the new notion of 2-legal with respect to $\{(0,m,k) , (1,m',k')\}$. If chosen the right way, this new notion will hand us a condition that guarantees that no new pathological situations arises for the two pairs $(\varphi_m,\varphi_k)$ and $(\varphi_{k'}, \varphi_{m'})$. 

So our strategy for producing a model where the $\bf{\Sigma^1_3}$-separation property is as follows: we list all possible reals $x$, parameters $y$ and pairs of $\Sigma^1_3$-formulas
$(\varphi_m(\cdot,y), \varphi_k(\cdot,y))$, while simultaneously define stronger and stronger versions of legality, which take care of placing the reals we encounter along the iteration in a non-pathological way.

\subsection{$\alpha$-legal forcings}
This section shall give a precise recursive definition of the
process sketched above.

The notions of 0 and 1-legality will form the base cases of an inductive definition.
Let $\alpha \ge 1$ be an ordinal and assume we defined already the notion of $\alpha$-legality. Then we can inductively define the notion of $\alpha+1$-legality  as follows.

Suppose that $\gamma < \omega_1$, $F$ is a bookkeeping function,  \[F: \gamma \rightarrow H(\omega_2)^5 \] and \[\forceP=(\forceP_{\beta} \, : \, \beta < \gamma)\] is a legal forcing relative to $F$ (in fact relative to some bookkeeping $F'$ determined by $F$ in a unique way - the difference here is not relevant). 

Suppose that 
\[E= \{(\delta, \dot{y}_{\delta}, m_{\delta} ,k_{\delta}) \, : \,  \delta \le \alpha\}\]
where $m_{\delta},k_{\delta} \in \omega$ and every $\dot{y}_{\delta}$ is a  $\forceP$-name of a real and for every two ordinals $\beta, \gamma< \alpha$, $\forceP \Vdash (\dot{y}_{\beta}, m_{\beta},k_{\beta}) \ne (\dot{y}_{\gamma},m_{\gamma},k_{\gamma})$. Suppose that for every $\delta \le \alpha$, $(\forceP_{\beta} \, : \, \beta < \gamma )$ is $\delta$-legal with respect to $E \upharpoonright \delta = \{ (\eta, \dot{y}_{\eta},m_{\eta},k_{\eta}) \in E \, : \, \eta < \delta \}$ and $F$.
Finally assume that $\dot{y}_{\alpha+1}$ is a $\forceP$-name for a real and $m_{\alpha+1},k_{\alpha+1} \in \omega$ such that $\forceP \Vdash \forall \delta \le \alpha ((\dot{y}_{\delta},m_{\delta},k_{\delta}) \ne (\dot{y}_{\alpha+1},m_{\alpha+1},k_{\alpha+1}))$. Then we say that $(\forceP_{\beta} \, : \, \beta < \gamma )$ is $\alpha+1$-legal with respect to $E \cup \{\alpha+1, \dot{y}_{\alpha+1},m_{\alpha+1},k_{\alpha+1})\}$ and $F$ if it obeys the following rules. 
\begin{enumerate}
\item Whenever $\beta < \gamma $ is such that there is a $\forceP_{\beta}$-name $\dot{x}$ of a real and an integer $ i\in\{1,2\}$ such that
\[F(\beta)= (\dot{x},\dot{y}_{\alpha+1},m_{\alpha+1},k_{\alpha+1},i)\] and $\dot{y}_{\alpha+1}$ is in fact a $\forceP_{\beta}$-name, and for $G_{\beta}$  a $\forceP_{\beta}$-generic over $W$, $W[G_{\beta}]$ thinks that
\begin{align*}
\exists \forceQ (&\forceQ \text{ is } \alpha \text{-legal with respect to } E \,  \land \\& \forceQ \Vdash x \in A_m({y}_{\alpha+1})),
\end{align*}
where $x=\dot{x}^G$, and $y_{\alpha}=\dot{y}_{\alpha+1}^G$.
Then continuing to argue in $W[G_{\beta}]$, if ${\forceQ}_1=\dot{\forceQ}_1^{G_{\beta}}$ we let
\[\forceP(\beta)= \hbox{Code}((x,y,m,k),1). \]  Note that we confuse here the quadruple $(x,y,m,k)$ with one real $w$ which codes this quadruple.

\item
Whenever $\beta < \gamma $ is such that there is a $\forceP_{\beta}$-name $\dot{x}$ of a real and an integer $ i \in \{1, 2\}$ such that
\[F(\beta)= (\dot{x},\dot{y}_{\alpha+1},m_{\alpha+1},k_{\alpha+1}, i)\] and for $G_{\beta}$ which is $\forceP_{\beta}$-generic over $W$, $W[G_{\beta}]$ thinks that
\begin{align*}
\forall \forceQ_1 (&\forceQ_1 \text{ is } \alpha \text{-legal with respect to } E   \\& \rightarrow \, \lnot(\forceQ_1 \Vdash x \in A_m(\dot{y}_{\alpha+1})))
\end{align*}
but there is a forcing $\forceQ_2$ such that $W[G_{\beta}]$ thinks that 
\begin{align*}
\forceQ_{2} \text{ is } \alpha &\text{-legal with respect to $E$ and } \\ & {\forceQ_2} \Vdash x \in A_k( \dot{y}_{\alpha+1} )
\end{align*}

Then continuing to argue in $W[G_{\beta}]$, we force with
\[\forceP(\beta):= \hbox{Code}((x,y,m,k),2).\]  Note that we confuse here again the quadruple $(x,y,m,k)$ with one real $w$ which codes this quadruple.

\item If neither 1 nor 2 is true, then either \[ \forceP(\beta)=\hbox{Code}((x,y,m,k),2)\] or 
\[ \forceP(\beta)=\hbox{Code}((x,y,m,k),1)\] depending on whether $ i\in\{1,2\}$ in $F(\beta)$ was 1 or 2. 

\item 
If $F(\beta) = (\dot{x},\dot{y},m,k,i)$ and for our $\forceP_{\beta}$-generic filter $G$, $W[G] \models \forall \delta \le \alpha+1 ((\delta,\dot{y}^G,m,k) \notin E^G)$,
then, working over $W[G_{\beta}]$ let
\[ \forceP(\beta)=\hbox{Code}((x,y,m,k),i)\] depending on whether $ i\in\{1,2\}$ in $F(\beta)$ was 1 or 2.
\end{enumerate} 
This ends the definition for the successor step $\alpha \rightarrow \alpha+1$.
For limit ordinals $\alpha$, we say that a legal forcing $\forceP$ is $\alpha$ legal with respect to $E$ and $F$ if for every $\eta < \alpha$,
$(\forceP_{\beta} \,: \, \beta < \gamma)$ is $\eta$-legal with respect to $E \upharpoonright \eta$ and some $F'$.
\par \medskip

We add a couple of remarks concerning the last definition.
\begin{itemize}

\item By definition, if $\delta_2 < \delta_1$ and $\forceP_1$ is $\delta_1$-legal with respect to $E= \{(\beta, \dot{y}_{\beta}, m_{\beta} ,k_{\beta}) \, : \,  \beta \le \delta_1\}$ and some $F_1$, then  $\forceP_1$ is also $\delta_2$-legal with respect to $E \upharpoonright \delta_2 = \{(\beta, \dot{y}_{\beta}, m_{\beta} ,k_{\beta}) \, : \,  \beta \le \delta_2\}$ and an altered bookkeeping function $F'$.
 
\item The notion of $\alpha$-legal can be defined in a uniform way over any legal extension $W'$ of $W$.

\item We will often just say that some iteration $\forceP$ is $\alpha$-legal, by which we mean that there is a set $E$ and a bookkeeping $F$ such that $\forceP $ is $\alpha$-legal with respect to $E$ and $F$.
\end{itemize}

\begin{lemma}\label{productlegal}
Let $\alpha \ge 1$, assume that $W'$ is some $\alpha$-legal generic extension of $W$, and that $\forceP^1=(\forceP^1_{\beta} \,: \,\beta < \delta)$ and $\forceP^2=(\forceP^2_{\beta} \,: \, \beta < \delta) $ are two $\alpha$-legal forcings over $W'$ with respect to a common set $E=\{\delta,\dot{y}_{\delta},m_{\delta}, k_{\delta} \,: \, \delta < \alpha\}$ and bookkeeping functions $F_1$ and $F_2$ respectively. 
Then there is a bookkeeping function $F$ such that $\forceP_1 \times \forceP_2$ is $\alpha$-legal over $W'$ with respect to $E$ and $F$.
\end{lemma}
\begin{proof}
We define $F \upharpoonright \delta_1$ to be $F_1$. For values $\delta_1+ \beta > \delta_1$ we let $F(\delta_1+\beta)$ be such that its value on the first four coordinates equal the first four coordinates of $F_2(\beta)$, i.e. $F(\delta_1+\beta)=(\dot{x},\dot{y},m,k,i)$ for some $i \in \{1,2\}$ where $F_2(\beta)=(\dot{x},\dot{y},m,k,i')$. We claim now that we can define the remaining value of $F(\beta)$, in such a way that the lemma is true. This is shown by induction on $\beta< \delta_2$.
Let $(\forceP_2)_{\beta}$ be the iteration of $\forceP_2$ up to stage $\beta < \delta_2$. Assume, that $\forceP_1 \times (\forceP_2)_{\beta}$ is in fact an $\alpha$-legal forcing relative to $E$ and $F$. Then we have that $F(\delta_1+\beta) \upharpoonright 5=F_2(\beta) \upharpoonright 5 =(\dot{x},\dot{y},m,k)$, and we claim that at that stage,
\begin{claim} 
If we should apply case 1,2 or 3, when considering the forcing $\forceP_1 \times \forceP_2$ as an $\alpha$-legal forcing relative to $E$ over the model $W'$, we must apply the same case when considering $\forceP_2$ as an $\alpha$-legal forcing over the model $W'$ relative to $E$.
\end{claim}   
Once the claim is shown, the lemma can be proven as follows by induction on $\beta < \delta_2$: we work in the model $W'[\forceP_1][(\forceP_2)_{\beta}]$, consider $F(\delta_1+\beta) \upharpoonright 5 = F_2(\beta) \upharpoonright 5$, and ask which of the four cases has to be applied. By the claim, it will be the same case, as when considering $\forceP_2$ over $W'$ as an $\alpha$-legal forcing relative to $E$ and $F_2$. In particular the forcing $\forceP_2(\beta)$ we define at stage $\beta$ will be a choice obeying the rules of $\alpha$-legality, even when working over the model $W'[\forceP_1][(\forceP_2)_{\beta}]$.  This shows that $\forceP_1 \times \forceP_2$ is an $\alpha$-legal forcing relative to $E$ and some $F$ over $W'$.

\medskip

The proof of the claim is via induction on $\alpha$.
If $\alpha=1$ and both $\forceP_1$ and $\forceP_2$ are 1-legal with respect to $E$ which must be of the form $ \{0,\dot{y},m,k\}$, then we shall show that there is a bookkeeping $F$ such that $(\forceP_2)_{\beta}\, : \, \beta < \delta_2\})$ is still 1-legal with respect to $E$, even when considered in the universe $W'[\forceP_1]$.
We assume first that at stage $\delta_1+\beta$ of $\forceP_1 \times \forceP_2$  case 1 in the definition of 1-legal applies, when working in the model $W'[\forceP_1][(\forceP_2)_{\beta}]$ relative to $E$ and $F$. Thus
\[ F(\beta) \upharpoonright 5= (\dot{x},\dot{y},m,k) \]
and $(0,\dot{y},m,k) \in E$ and for any $G^1 \times G_{\beta}$ which is $\forceP_1 \times (\forceP_2)_{\beta}$-generic over $W'$, if $\dot{x}^{G_{\beta}}=x$ and $\dot{y}^{G_{\beta}}=y$, the universe $W'[G_1 \times G_{\beta}]$ thinks that
\begin{align*}
\exists \forceQ (&\forceQ \text{ is } 0 \text{-legal with respect to } E \text{ and some F }\,  \land \\& \, \forceQ \Vdash {x} \in A_m({y})).
\end{align*}
Thus, if we work over $W'[G_{\beta}]$ instead it will think
\begin{align*}
\exists (\forceP_1 \times \forceQ) (&\forceP_1 \times \forceQ \text{ is } 0 \text{-legal } \,  \land \\&  \, \forceP_1 \times \forceQ \Vdash {x} \in A_m({y})).
\end{align*}
Thus, at stage $\beta$, we are in case 1 as well, when considering $\forceP_2$ as an 1-legal forcing over $W'$ relative to $E$.

If, at stage $\beta$, case 2 applies, when considering $\forceP_1 \times \forceP_2$ as a 1-legal forcing with respect to $E$ over $W'$, then we argue first that case 1 is impossible when considering $\forceP_2$ as a $1$-legal forcing over $W'$.
Indeed, assume for a contradiction that case 1 must be applied, then, by assumption,
$\forceP_2(\beta)$ will force that $x \in A_m(y)$. Yet, by Shoenfield absoluteness, $\forceP_2(\beta)$ would witness that we are in case 1 at stage $\beta$ when considering $\forceP_1 \times\forceP_2$ as 1-legal with respect to $E$ over $W'$, which is a contradiction.

Thus we can not be in case 1 and we shall show that we are indeed in case 2, i.e. there is a 0-legal forcing $\forceQ$, such that $\forceQ \Vdash x \in A_k(y)$, but such a $\forceQ$ exists, namely $\forceP_2(\beta)$, 

Finally, if at stage $\beta$, case 3 applies when considering $\forceP_2$ as a 1-legal forcing with respect to $E$ over $W'[\forceP_1]$, we claim that we must be in case 3 as well, when considering $\forceP_2$ over just $W'$.
If not, then we would be in case 1 or 2 at $\beta$. Assume without loss of generality that we were in case 1, then, as by assumption $\forceP_2$ is 1-legal over $W'$, $\forceP_2(\beta)$ will force $\Vdash x \in A_m(y)$. But this is a contradiction, so we must be in case 3 as well.
This finishes the proof of the claim for $\alpha=1$.

\medskip

We shall argue now that the Claim is true for $\alpha+1$-legal forcings provided we know that it is true for $\alpha$-legal forcings.
Again we shall show the claim via induction on $\beta$. So assume that $\forceP_1 \times (\forceP_2)_{\beta}$ is $\alpha+1$-legal with respect to $E=E \upharpoonright \alpha \cup \{(\alpha, \dot{y},m_{\alpha},k_{\alpha})\}$ and an $F$ whose domain is $\delta_1 +\beta$. 
We look at
\[F(\delta_1+ \beta) \upharpoonright 5=F_2(\beta) \upharpoonright 5=  (\dot{x},\dot{y},m_{\alpha},k_{\alpha})\]
We concentrate on the case where $\beta$ is such that case 2 applies when considering $\forceP_1 \times (\forceP_2)_{\beta}$ over $W'$. The rest follows similarly. Our goal is to show that case 2 must apply when considering the $\beta$-th stage of the forcing using $F_2$ and $E$ over $W'[(\forceP_2)_{\beta}]$ as well.

Assume first for a contradiction, that, when working over $W'[(\forceP_2)_{\beta}]$, at stage $\beta$, case 1 applies.
Then, for any $(\forceP_2)_{\beta}$-generic filter $ G_{\beta}$ over $W'$, 
\begin{align*}
W'[G_{\beta}] \models \exists \forceQ (&\forceQ \text{ is $\alpha$-legal with respect to $E \upharpoonright \alpha$ and some $F'$ and} \\ & \forceQ \Vdash x \in A_m(y))
\end{align*}
Now, as $\forceP_2$ is $\alpha$-legal, we know that $\forceP_2(\beta)$ is such that $\forceP_2(\beta) \Vdash x \in A_m(y)$.

Thus, using the upwards-absoluteness of $\Sigma^1_3$-formulas, at stage $\beta$ of the $\alpha+1$-legal forcing determined by $F$ and $E$, there is an $\alpha$-legal forcing $\forceQ$ with respect to $E \upharpoonright \alpha$ which forces $x \in A_m(y)$, namely $\forceP_2(\beta)$. But this is  a contradiction, as we assumed that when considering $\forceP_1 \times (\forceP_2)_{\beta}$ over $W'$ at stage $\beta$, case 1 does not apply, hence such an $\alpha$-legal forcing should not exist.

So we know that case 1 is not true. We shall show now that case 2 must apply at stage $\beta$ when considering $\forceP_2$ over the universe $W'$. By assumption we know that
\begin{align*}
 W'[\forceP_1][(\forceP_2)_{\beta}] \models&\exists \forceQ_2 (\forceQ_2 \text{ is $\alpha$-legal with respect to $E \upharpoonright \alpha$ and } \\&  \, \, \forceQ_2 \Vdash x \in A_k(y)
\end{align*}
As $\forceP_1$ is $\alpha+1$-legal with respect to $E$ and $F_1$, it is also $\alpha$-legal with respect to $E \upharpoonright \alpha$ and some altered $F'_1$, thus, as a consequence from the induction hypothesis, we obtain that 
\[W'[(\forceP_2)_{\beta}] \models \forceP_1 \times \forceQ_2 \text{ is $\alpha$-legal and } \forceP_1 \times \forceQ_2 \Vdash x \in A_k(y).\] 
But then, $\forceP_1 \times \forceQ_2$ witnesses that we are in case 2 as well when at stage $\beta$ of $\forceP_2$ over $W'$.
This ends the proof of the claim and so we have shown the lemma.
\end{proof}
\subsection{Proof of the first Main Theorem}
We are finally in the position to prove that the $\bf{\Sigma^1_3}$-separation property can be forced over $W$.
The iteration we are about to define inductively will be a legal iteration, whose tails are $\alpha$-legal and $\alpha$-increases along the iteration. We start with fixing a bookkeeping function \[ F: \omega_1 \rightarrow H(\omega_1)^4 \]
which visits every element cofinally often. The role of $F$ is to list all the quadruples of the form $(\dot{x}, \dot{y},m,k)$, where $\dot{x}, \dot{y}$ are names of reals in the forcing we already defined, and $m$ and $k$ are natural numbers which represent  $\Sigma^1_3$-formulas with two free variables, cofinally often.
Assume that we are at stage $\beta < \omega_1$ of our iteration.  By induction we will have constructed already the following list of objects.
\begin{itemize}

\item An ordinal $\alpha_{\beta} \le \beta$ and a set $E_{\alpha_{\beta}}$ which is of the form $\{\eta ,\dot{y}_{\eta}, m_{\eta},k_{\eta} \, : \, \eta < \alpha_{\beta} \}$, where $\dot{y}_{\eta}$ is a $\forceP_{\beta}$-name of a real, $m_\eta, k_{\eta}$ are natural numbers. As a consequence, for every bookkeeping function $F'$, we do have a notion of $\eta$-legality relative to $E$ and $F'$ over $W[G_{\beta}]$.

\item We assume by induction that for every $\eta < \alpha_{\beta}$, if $\beta_{\eta}< \beta$ is the $\eta$-th stage in $\forceP_{\beta}$, where we add a new member to $E_{\alpha_{\beta}}$, then $W[G_{\beta_{\eta}}]$ thinks that the $\forceP_{\beta_{\eta} \beta}$ is $\eta$-legal with respect to $E_{\alpha_{\beta}} \upharpoonright \eta$.

\item If $(\eta, \dot{y}_{\eta},m_{\eta},k_{\eta}) \in E_{\alpha_{\beta}}$, then we set again $\beta_{\eta}$ to be the $\eta$-th stage in $\forceP_{\beta}$ such that a new member to $E_{\alpha_{\beta}}$ is added. In the model $W[G_{\beta_{\eta}}]$, we can form the set of reals $R_{\eta}$ which were added so far by the use of a coding forcing in the iteration up to stage $\beta_{\eta}$, and  which witness $({\ast} {\ast} {\ast})$ holds for some $(x,y,m,k)$; 

Note that $R_{\eta}$ is a countable set of reals and can therefore be identified with a real itself, which we will do. The real $R_{\eta}$ indicates the set of places we must avoid when expecting correct codes, at least for the codes which contain $\dot{y}_{\eta},m_{\eta}$ and $k_{\eta}$.

\end{itemize}
Assume that $F(\beta)= (\dot{x},\dot{y},m,k)$, assume that $\dot{x}$, $\dot{y}$ are $\forceP_{\beta}$-names for reals, and  $m,k\in \omega$ correspond to the $\Sigma^1_3$-formulas $\varphi_m(v_0,v_1)$ and $\varphi_k(v_0,v_1)$. Assume that $G_{\beta}$ is a $\forceP_{\beta}$-generic filter over $W$. Let $\dot{x}^{G_{\beta}}=x$ and $\dot{y}_1^{G_{\beta} }=y_1, \dot{y}_2^{G_{\alpha}}=y_2$. 
We turn to the forcing $\forceP(\beta)$ we want to define at stage $\beta$ in our iteration.
Again we distinguish several cases.
\begin{itemize}
\item[(A)]  Assume that $W[G_{\beta}]$ thinks that there is an $\alpha_{\beta}$-legal forcing $\forceQ$ relative to $E_{\alpha_{\beta}}$ and some $F'$ such that 
\begin{align*}
\forceQ \Vdash 
\exists z (z\in A_m(y) \cap A_k(y)).
\end{align*}
Then we pick the $<$-least such forcing, where $<$ is some previously fixed wellorder. We denote this forcing with $\forceQ_1$
and use 
\[\forceP(\beta):= \forceQ_1.\]
We do not change $R_{\beta}$ at such a stage.

\item[(B)] Assume that (i) is not true.

\begin{itemize} 

\item[(i)] Assume however that there is an $\alpha_{\beta}$-legal forcing $\forceQ$ in $W[G_{\beta}]$ with respect to $E_{\alpha_{\beta}}$ and some $F'$ such that 
\begin{align*}
\forceQ \Vdash 
 x \in A_m(y).
\end{align*}
Then we set 
\[\forceP(\beta):= \hbox{Code} (({x}, {y}, m,k), 1).\]
In that situation, we enlarge the $E$-set as follows. We let $(\alpha_{\beta}, \dot{y}, m, k)=: (\alpha_{\beta}, \dot{y}_{\alpha_{\beta}}, m_{\alpha_{\beta}}, k_{\alpha_{\beta}})$ and \[E_{\alpha_{\beta}+1}:= E_{\alpha_{\beta}} \cup \{ (\alpha_{\beta}, \dot{y}, m, k) \} .\]
Further, if we let $r_{\eta}$ be the real which is added by  $\hbox{Code} (({x}, {y}, m,k), 1)$ at stage $\eta$ of the iteration which witnesses $({\ast} {\ast} {\ast})$ of some quadruple $(x_{\eta},y_{\eta},m_{\eta},k_{\eta}$). Then we collect all the countably many such reals we have added so far in our iteration up to stage $\beta$ and put them into one set $R$ and let
\[ R_{\alpha_{\beta}+1 }:= R . \] 
\item[(ii)] Assume that (i) is wrong, but there is an $\alpha_{\beta}$-legal forcing $\forceQ$ with respect to $E_{\alpha_{\beta}}$ and some $F'$  in $W[G_{\beta}]$ such that 
\begin{align*}
\forceQ \Vdash 
 x \in A_k(y).
\end{align*}
Then we set
\[\forceP(\beta):= \hbox{Code} (({x}, {y}, m,k), 2).\]
In that situation, we enlarge the $E$-set as follows. We let the new $E$ value $(\alpha_{\beta}, \dot{y}_{\alpha_{\beta}}, m_{\alpha_{\beta}}, k_{\alpha_{\beta}})$ be $ (\alpha_{\beta}, \dot{y}, m, k)$  and \[E_{\alpha_{\beta}+1}:= E_{\alpha_{\beta}} \cup \{ (\alpha_{\beta}, \dot{y}, m, k) \}.\]
Further, if we let $r_{\eta}$ be the real which is added by  $\hbox{Code} (({x}, {y}, m,k), 1)$ at stage $\eta$ of the iteration which witnesses $({\ast} {\ast} {\ast})$ of some quadruple $(x_{\eta},y_{\eta},m_{\eta},k_{\eta}$). Then we collect all the countably many such reals we have added so far in our iteration up to stage $\beta$ and put them into one set $R$ and let
\[ R_{\alpha_{\beta}+1 }:= R  . \] 

\item[(iii)] If neither (i) nor (ii) is true, then there is no $\alpha_{\beta}$-legal forcing $\forceQ$ with respect to $E_{\alpha_{\beta}}$ which forces $x \in A_m(y)$ or $x \in A_k(y)$, and we set
\[ \forceP(\beta):=\hbox{Code} (({x}, {y}, m,k), 1).\]
Further, if we let $r_{\eta}$ be the real which is added by  $\hbox{Code} (({x}, {y}, m,k), 1)$ at stage $\eta$ of the iteration which witnesses $({\ast} {\ast} {\ast})$ of some quadruple $(x_{\eta},y_{\eta},m_{\eta},k_{\eta}$). Then we collect all the countably many such reals we have added so far in our iteration up to stage $\beta$ and put them into one set $R$ and let
\[ R_{\alpha_{\beta}+1 }:= R . \] 
Otherwise we force with the trivial forcing.
\end{itemize}
 \end{itemize}
At limit stages $\beta$, we let $\forceP_{\beta}$ be the inverse limit of the $\forceP_{\eta}$'s, $\eta < \beta$, and set $E_{\alpha_{\beta}}= \bigcup_{\eta < \beta}  E_{\alpha_{\eta}}$.
This ends the definition of $\forceP_{\omega_1}$.

\subsection{Discussion of the resulting universe}
We let $G_{\omega_1}$ be a $\forceP_{\omega_1}$-generic filter over $W$.
As $W[G_{\omega_1}]$ is a proper extension of $W$, $\omega_1$ is preserved. Moreover $\CH$ remains true.
A second observation is that for every stage $\beta$ of our iteration and every $\eta > \beta$, the intermediate forcing $\forceP_{[\beta, \eta)}$, defined as the factor forcing of $\forceP_{\beta}$ and $\forceP_{\eta}$, is always an $\alpha_{\beta}$-legal forcing relative to $E_{\alpha_{\beta}}$ and some bookkeeping. This is clear as by the definition of the iteration, we force at every stage $\beta$ with a $\alpha_{\beta}$-legal forcing  relative to $E_{\alpha_{\beta}}$ and $\alpha_{\beta}$-legal becomes a stronger notion as we increase $\alpha_{\beta}$.

We shall define the separating sets now.
For a pair of disjoint $\Sigma^1_3(y)$ sets $A_m(y)$ and $A_k(y)$
we consider the least stage $\beta$ such that there is a $\forceP_{\beta}$-name $\dot{z}$ such that $\dot{z}^{G_{\beta}}=z$ and $(z,y,m,k)$ are considered by $F$ at stage $\beta$. Let $R_{\beta}$ be the set of all reals  which were added by the coding forcing up to stage $\beta$ and which witness $({\ast}{\ast}{\ast})$ for some $(x,y,m,k)$.
Then for any real $x \in W[G_{\omega_1}]$:
\begin{align*}
x \in D^1_{y,m,k} (R_{\beta})\Leftrightarrow \exists r \notin R_{\beta} (&L[r]  \models (x,y,m,k) \text{ can be read off from a code} \\& \text{ written on an } \omega_1\text{-many $\omega$-blocks of elements of } \\& \vec{S^1} ).
\end{align*}
and
\begin{align*}
x \in D^2_{y,m,k} (R_{\beta})\Leftrightarrow \exists r \notin R_{\beta} (&L[r]  \models (x,y,m,k) \text{ can be read off from a code} \\& \text{ written on an } \omega_1\text{-many $\omega$-blocks of elements of } \\& \vec{S^2}).
\end{align*}
It is clear from the definition of the iteration that for any  real parameter $y$ and any $m,k \in \omega$, $D^1_{y,m,k} \cup D^2_{y,m,k} = 2^{\omega}$. 
The next lemma establishes that the sets are indeed separating.
\begin{lemma}
In $W[G_{\omega_1}]$, let $y$ be a real and let $m,k \in \omega$ be such that $A_m(y) \cap A_k(y)=\emptyset.$ Then there is an real $R$ such that the sets  $D^1_{y,m,k}(R)$ and $ D^2_{y,m,k}(R)$ partition the reals. 
\end{lemma}
\begin{proof}
Let $\beta$ be the least stage such that there is a real $x$ such that $F(\beta) \upharpoonright 4=(\dot{x}, \dot{y},m,k)$ with $\dot{x}^G_{\beta}=x$, $\dot{y}^G_{\beta}=y$. Let $R$ be $R_{\beta}$ and $R_{\beta}$ be as defined above. Then, as $A_m(y)$ and $A_k(y)$ are disjoint in $W[G_{\omega_1}]$, by the rules of the iteration, case B must apply at $\beta$. Assume now for a contradiction, that $D^1_{y,m,k}(R)$ and $ D^2_{y,m,k}(R)$ do have non-empty intersection in $W[G_{\omega_1}]$. Let $z \in D^1_{y,m,k}(R) \cap D^2_{y,m,k}(R)$ and let $\beta' > \beta$ be the first stage of the iteration which sees that $z$ is in the intersection. Then, by the rules of the iteration and without loss of generality, we must have used case B(i) at $\beta$, and case B(ii) at stage $\beta'$. But this would imply, that at stage $\beta$, there is an $\alpha_{\beta}$-legal forcing with respect to $E_{\alpha_{\beta}}$, which forces $x \in A_m(y) \cap A_k(y)$, namely the intermediate forcing $(\forceP_{\beta \beta'} )$. This is a contradiction.

\end{proof}

\begin{lemma}
In $W[G_{\omega_1}]$, for every pair $m,k$ and every parameter $y \in 2^{\omega}$ such that $A_m(y) \cap A_k(y) = \emptyset$ there is a real $R$ such that 
\[ A_m(y) \subset D^1_{y,m,k}(R) \land A_k(y) \subset D^2_{y,m,k}(R)\]
\end{lemma}
\begin{proof}
The proof is by contradiction. Assume that there is a real $x$ such that $x \in A_m(y) \cap D^2_{y,m,k}(R)$ for every $R$.
We consider the smallest ordinal $\beta < \omega_1$ such that  $F(\beta)\upharpoonright 4$ considers a quintuple of the form $(x,y,m,k)$ and let $R= R_{\beta}$. As $A_m(y)$ and $A_k(y)$ are disjoint we know that at stage $\beta$ we were in case B. 
As $x$ is coded into $\vec{S^2}$ after stage $\beta$ and by the last Lemma, Case B(i) is impossible at $\beta$. Hence, without loss of generality we may assume that case B(ii) applies at $\beta$. As a consequence, there is a forcing $\forceQ_2$ which is $\alpha_{\beta}$-legal with respect to $E_{\alpha_{\beta}}$ which forces $\forceQ_2 \Vdash x \in A_k(y)$. Note that in that case we collect all the reals which witness $( {\ast} {\ast} {\ast})$ for some quadruple to form the set $R_{\beta}$. 

As $x \in A_m(y) \cap  D^2_{y,m,k} (R)$, we let $\beta'> \beta$ be the first stage such that $W[G_{\beta'}] \models x \in A_m(y)$. By Lemma \ref{productlegal}, $W[G_{\beta}]$ thinks that $\forceQ_2 \times \forceP_{\beta \beta'}$ is $\alpha_{\beta}$-legal with respect to $E_{\alpha_{\beta}}$, yet 
$\forceQ_2 \times \forceP_{\beta \beta'} \Vdash x \in A_m(y) \cap A_k(y)$. This is a contradiction.
\end{proof}

The next lemma will finish the proof of our theorem:
\begin{lemma}
In $W[G_{\omega_1}]$, if $y \in 2^{\omega}$ is an arbitrary  parameter, $R$ a real and $m,k$ natural numbers, then the sets $D^1_{y,m,k}(R)$ and $D^2_{y,m,k}(R)$ are $\Sigma^1_3(R)$-definable.
\end{lemma}
\begin{proof}
The proof is almost identical to the proof of Lemma \ref{Sigma13}, the only thing added is the real $R$ as parameter.
\end{proof}

\section{Forcing the lightface $\Sigma^1_3$-separation property}

The techniques developed in the previous sections can be used to force a model where the (lightface) $\Sigma^1_3$-separation property is true. In what follows, we heavily use ideas and notation from earlier sections, so the upcoming proof can not be read independently.

\begin{theorem}
Starting with $L$ as the ground model, one can produce a set-generic extension $L[G]$ which satisfies $\CH$ and in which the ${\Sigma^1_3}$-separation property holds.
\end{theorem}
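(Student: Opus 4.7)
The plan is to adapt the construction of Section 4 to produce lightface $\Sigma^1_3$-separation. The key observation is that in the lightface setting there are only countably many $\Sigma^1_3$-formulas in one free variable, hence only countably many pairs $(\varphi_m, \varphi_k)$ to be separated. This allows a variant of the dynamic $n$-legal notion from Section 4 in which the index $n$ ranges over $\omega$ rather than $\omega_1$, and in which no real parameters appear in the assignment functions.

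I start from the preparatory ground model $W$ of Section 3 together with its definable independent sequences $\vec{S}^1$ and $\vec{S}^2$ of Suslin trees, and iterate essentially as in the boldface proof, but with a bookkeeping function $F$ scheduling only triples $(\dot{x}, m, k)$ where $m, k$ are G\"odel numbers of lightface $\Sigma^1_3$-formulas. Arrange $F \in L$ canonically so that each pair $(m,k)$ is first considered at a stage $\delta_{m,k} < \omega_1$ which is lightface-definable in $L$ from $m$ and $k$ alone, for instance by setting $\delta_{m,k} := \omega \cdot \langle m,k \rangle$ for a fixed recursive pairing. Following Section 4, introduce an assignment function $g^{n}_{m,k}$ the first time case $f$ applies to $(m,k)$ and thereafter restrict all subsequent factors to $n$-legal forcings respecting every assignment introduced so far. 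Since only countably many assignments are ever introduced and at each stage $\omega_1$-many fresh Suslin trees remain available, this gradually strengthened notion of legality is maintainable throughout the $\omega_1$-length iteration.

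For each disjoint pair $(A_m, A_k)$ of lightface $\Sigma^1_3$-sets, define $D^1_{m,k}$ and $D^2_{m,k}$ from codes written into $\vec{S}^1$ and $\vec{S}^2$ past the stage $\delta_{m,k}$, exactly as $D^1_{m,k}(\alpha_0)$ was defined past $\alpha_0$ in Section 3. The decisive change is that $\delta_{m,k}$ is now a lightface-definable countable ordinal depending only on $(m,k)$, not a free parameter. Adapting Lemma~\ref{Sigma13}, one writes that $x \in D^1_{m,k}$ iff there is a coding real $r$ and a countable ordinal $\xi$ such that every countable transitive $\ZFP$-model containing $r$ and $\xi$, with the correct $\omega_1^L$, internally verifies both that $\xi = \delta_{m,k}$ and that there is an $\omega$-block on $\vec{S}^1$ past $\xi$ coding $x$. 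Since $F$ is lightface over $L$, the internal check $\xi = \delta_{m,k}$ is a $\Sigma_1$-statement over $L[r]$ and contributes no additional complexity, so counting quantifiers as in Lemma~\ref{Sigma13} shows that $D^1_{m,k}$ and $D^2_{m,k}$ are lightface $\Sigma^1_3$ in $m,k$.

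The main obstacle is to ensure that the absorption of $\delta_{m,k}$ into the definition does not inflate complexity beyond $\Sigma^1_3$; this forces the bookkeeping $F$ to be chosen canonically enough that small transitive models recognize $\delta_{m,k}$ from $(m,k)$ alone. A secondary point is that the two separation lemmas of Section 3 (the partition lemma, and the lemma showing that $D^1_{m,k}$ actually separates $A_m$ from $A_k$) go through when $\alpha_0$ is replaced by the smaller, pair-dependent ordinal $\delta_{m,k}$; this follows from the $n$-legal framework, which by design prevents any new pathological codes for $(m,k)$ from being introduced after its handling stage. Once these adjustments are in place, the proofs from Sections 3 and 4 transfer to the lightface setting essentially verbatim.
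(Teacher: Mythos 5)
Your overall architecture (restrict to the countably many lightface pairs, dynamically strengthen legality, absorb the threshold ordinal into the definition) matches the paper's intent, but the step where you make the threshold lightface-definable has a genuine gap, and it is precisely the step the paper's Section 5 spends most of its effort on. You set $\delta_{m,k} := \omega\cdot\langle m,k\rangle$, the \emph{iteration stage} at which $(m,k)$ is first scheduled, and then ask countable models to verify ``$\xi = \delta_{m,k}$'' as a $\Sigma_1$-fact about $L$. But the ordinal that actually matters for correctness of the separating sets is not an iteration stage: it is an index into the sequence $\vec{S}$, namely a bound past which no \emph{unintended} code $(x,m,k)$ has been written. Unintended codes are produced by the legal forcings used in cases $c$, $d$, $e$ (and the auxiliary forcing in case $f$) when \emph{other} pairs are forced to be non-disjoint, and the set of Suslin trees those forcings touch depends on the generic $G_\alpha$ — on which case applied at each earlier stage and which (least) legal forcing witnessed it. This supremum is therefore not computable in $L$ from $(m,k)$, and a countable transitive model containing only a coding real cannot recognize it from $(m,k)$ alone. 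Your proposal conflates the (fixable, lightface) scheduling stage with the (generic-dependent) $\vec{S}$-index threshold, so the displayed definition of $D^1_{m,k}$ is not actually $\Sigma^1_3$ without a parameter.

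The paper resolves exactly this by an extra layer of coding: at each stage where noise is created it \emph{computes} the resulting supremum $\alpha_n$ of touched $\vec{S}$-indices from the generic and then writes an auxiliary quadruple $(r_{\alpha_n},1,m,k)$ into $\vec{S}^1$; the strengthened legality notions guarantee that this auxiliary code is the unique one of its kind above the previous threshold, which yields an \emph{inductive} $\Sigma^1_3$-definition of $\alpha_n$ (the formula for $\alpha_{n+1}$ quantifies over a real witnessing the code and invokes the $\Sigma^1_3$-formula already obtained for $\alpha_n$). It also reorganizes the construction into a first $\omega$-length iteration that settles all the non-disjointness forcing and all the thresholds, followed by a second $\omega_1$-length iteration that only writes intended codes above the now-definable thresholds. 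If you want to avoid the auxiliary $(\cdot,1,m,k)$ codes you would instead have to argue that all noise can be confined to a region of $\vec{S}$ fixed in advance in $L$, which your proposal does not attempt; as written, the claim that ``the internal check $\xi=\delta_{m,k}$ contributes no additional complexity'' is where the argument breaks.
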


\begin{proof}

For the the proof to come, we will redefine the notion of legal forcings.
\begin{definition}
A mixed support iteration is called (0-)legal if it is defined as in Definition 4.2, with the only difference that we code (reals that code) quadruples of the form $(x,0,m,k)$ and $(x,1,m,k)$, where $m,k \in \omega$ and $x$ is (the name of) a real into $\vec{S}$.
\end{definition}
Similar to before, we take 0-legal forcings as the base set of forcings and define gradually smaller families of forcings, which we call $n$-legal forcings with respect to a set $E$ and a bookkeeping function $F$.
Assume that $E$ is a finite list of length $n$ of pairwise distinct pairs of natural numbers $(m_l, k_l)$, $l \le n$,
and that $F$ is a bookkeeping function.
Assume that for every $l \le n$ we do have a notion of $l$-legality with respect to $E \upharpoonright l$, let $(m_{n+1}, k_{n+1})$ be a new pair of natural numbers, distinct from the previous ones. Then we say that $\forceP$ is $n+1$-legal with respect to $E \cup \{ (m_{n+1}, k_{n+1} ) \}$ and $F$ if for every $l \le n$, $\forceP $ is $l$-legal with respect to $E \upharpoonright l$ and $F$ and it obeys the following rules:
\begin{enumerate}
\item $\forceP$ never codes a quadruple of the form $(x,1,m_{n+1},k_{n+1})$ into $\vec{S}$.
\item Whenever $\beta < \gamma $, where $\gamma$ is the length of the iteration $\forceP$, is such that there is a $\forceP_{\beta}$-name $\dot{x}$ of a real and an integer $ i\in\{1,2\}$ such that
\[F(\beta)= (\dot{x},m_{n+1},k_{n+1},i)\] and for  $G$ which is $\forceP_{\beta}$-generic over $W$, $W[G]$ thinks that
\begin{align*}
\exists \forceQ (&\forceQ \text{ is } n \text{-legal with respect to } E \,  \land \\&   \forceQ \Vdash x \in A_{m_{n+1}}),
\end{align*}
where $x=\dot{x}^G$, and $y_{\alpha}=\dot{y}_{\alpha+1}^G$.
Then continuing to argue in $W[G]$, we let
\[\forceP(\beta)= \hbox{Code}((x,0,m_{n+1},k_{n+1}),1).\] 

\item
Whenever $\beta < \gamma $ is such that there is a $\forceP_{\beta}$-name $\dot{x}$ of a real and an integer $ i\in\{1,2\}$ such that
\[F(\beta)= (\dot{x},m_{n+1},k_{n+1},i)\] and for $G_{\beta}$ which is $\forceP_{\beta}$-generic over $W$, $W[G_{\beta}]$ thinks that
\begin{align*}
\forall \forceQ_1 (&\forceQ_1 \text{ is } n \text{-legal with respect to } E  \\& \rightarrow \, \lnot(\forceQ_1 \Vdash x \in A_{m_{n+1}}))
\end{align*}
but there is a forcing $\forceQ_2$ such that $W[G_{\beta}]$ thinks that 
\begin{align*}
\forceQ_{2} \text{ is } n \text{-legal with respect to $E$ and } \\   {\forceQ_2} \Vdash x \in A_{k_{n+1}}
\end{align*}

Then continuing to argue in $W[G_{\beta}]$, we force with
\[\forceP(\beta):= \hbox{Code}((x,0,m_{n+1},k_{n+1}),2).\]

\item If neither 1 nor 2 is true, then either \[ \forceP(\beta)=\hbox{Code}((x,0,m_{n+1},k_{n+1}),2)\] or 
\[ \forceP(\beta)=\hbox{Code}((x,0,m_{n+1},k_{n+1}),1)\] depending on whether $ i\in\{1,2\}$ in $F(\beta)$ was 1 or 2. Otherwise $\forceP$ uses the trivial forcing at that stage. 

\item 
If $F(\beta) = (\dot{x},m,k,i)$ and for every $\forceP_{\beta}$-generic filter $G$, $W[G] \models \forall l \le n+1 ((m_l,k_l) \ne (m,k))$,
then let
\[ \forceP(\beta)=\hbox{Code}((x,0,m,k),i)\] depending on whether $ i\in\{1,2\}$ in $F(\beta)$ was 1 or 2.  
\end{enumerate} 
This ends the definition for the successor step $n \rightarrow n+1$.

With this new notion of $n$-legality, we start the proof of the theorem. The ground model over which we form an iteration is the universe $W$ again, which was defined earlier. Over $W$ we will perform first an $\omega$-length, finitely supported iteration $(\forceP_n)_{n \in \omega}$ of legal posets, and then a second legal iteration of length $\omega_1$. The codes of the form $(x,0,m,k)$ shall eventually define the separating sets for $\varphi_m$ and $\varphi_k$; codes of the form $(x,1,m,k)$ shall correspond to countable sets of reals  (i.e. reals themselves) which indicate the correctness of certain codes of the form $(x,0,m,k)$ which avoid the coding areas coded by these reals.

We let $\{(\varphi_{m_n}, \varphi_{k_n}) \, : \, n \in \omega \}$ be an enumeration of all pairs of $\Sigma^1_3$-formulas. Assume that $(\varphi_{m_0}, \varphi_{k_0})$ is such that there is no legal forcing $\forceQ$ such that $W[\forceQ] \models \exists z (\varphi_{m_0}(z) \land \varphi_{k_0}(z))$. Repeating the arguments from before, we set $E_0:=\{m_0,k_0) \}$ and define the notion of 1-legal with respect to $E_0$.
As will become clear in a second, every step of the iteration $(\forceP_n \, : \, {n \in\omega})$ will either use a legal forcing or define a new and gradually stronger notion of legality. We let $l_n \in \omega$ denote the degree of legality we have already defined at stage $n \in \omega$ of our iteration and define $l_0$ to be $0$ (where 0-legal should just be legal) and $l_1$ to be 1 for the base case of our induction; likewise $\forceP_0$ is set to be the trivial forcing.

The forcing $\forceP_{\omega}$ is the countably supported iteration of $(\forceP_n \, : \,n \in \omega)$, which we will define inductively.
Assume we are at stage $n  \ge 1 \in \omega$ of the iteration and we have defined already the following list of objects and notions:
\begin{enumerate}
\item $\forceP_{n-1}$ and the generic filter $G_{n-1}$.
\item A natural number $l_n \le n$ and a notion of $l_n$-legal relative to $E_{l_n}= \{ (m'_0,k'_0),...,(m'_{l_n-1},k'_{l_n-1}) \} \subset \{ (m_0,k_0),...,(m_{n-1},k_{n-1}) \} $, which is a strengthening of 1-legal relative to $E_0$.
\item A finite set of reals  $\{R_0<...<R_{n - (l_n -1)} \}$, where each real $R_i$ codes a countable set of reals. The choice of the indices will become clear later.

\end{enumerate}
Consider now the $n+1$-th pair $(\varphi_{m_n}, \varphi_{k_n})$ and split into cases.

\begin{enumerate}
\item[$(a)$] There is an $l_n$-legal forcing $\forceQ$ such that \[W[G_{n-1}] \models \forceQ \Vdash \exists z (\varphi_{m_n} (z) \land \varphi_{k_n} (z)), \] then we must use the forcing $\forceQ$. We collect all the reals we have added so far generically which witness $({\ast} {\ast} {\ast})$ for a triple $(x,m,k)$ and call the set $R_{n-l_n}$. In a second step, we use the usual method to code the quadruple $(R_{{n-l_n}}, 1,m'_{l_{n-1}},k'_{l_{n-1}} )$ into $\vec{S}^1$. 
\item[$(b)$] In the second case there is no $l_{n}$-legal forcing $\forceQ$ relative to $E_{l_n}$ which forces $\varphi_{m_n}$ and $\varphi_{k_n}$ to have non-empty intersection. In that case we force with the trivial forcing and define the notion $l_{n+1}$-legal. We first let $(m'_{l_n},k'_{l_n})=(m_n,k_n)$ and $E_{l_n +1}:= E_{l_n} \cup \{(m_n,k_n)\}$, and define $l_{n+1}$-legal relative to $E_{l_n+1}$ just as above. 
We  do not define a new $R_{n-l_n}$.
\end{enumerate}

We let $\forceP_{\omega}$ be the inverse limit of the forcings $\forceP_n$ and consider the universe $W[\forceP_{\omega}]$.
We shall and will assume from now on that in $\forceP_{\omega}$, case $(b)$ is applied infinitely many times. 
\begin{lemma}
For every $n \in \omega$, the tail of the iteration $(\forceP_m \, :\, m \ge n)$ is at least an $l_n-1$-legal iteration relative to $E_{l_n}$ as seen from $W[G_n]$.
\end{lemma}
\begin{proof}
This can easily  be seen if one stares at the definition of the iteration. 
At stage $n$ we either follow case $(a)$ which is an $l_n -1$-legal forcing, as we use the $l_n$-legal $\forceQ$ and additionally code $(R_{{n-l_n}}, 1,m'_{l_{n-1}},k'_{l_{n-1}} )$ which results in an $l_n-1$-legal forcing. Or we apply case $(b)$, thus define $l_n+1$-legality and every further factor of the iteration must be $l_n$-legal. As mixed support iterations of $l_{n}-1$-legal forcings yield an $l_n -1$-legal forcing, this ends the proof.

\end{proof}

The arguments which are about to follow will depend in detail heavily on the actual form of the iteration $\forceP_{\omega}$ which in turn depends on how the enumeration of the $\Sigma^1_3$-formulas behaves. The theorem will of course be true, no matter how $\forceP_{\omega}$ does look like. To facilitate the arguments and notation, however, we assume without loss of generality from now on that the sequence $(\varphi_{m_n}, \varphi_{k_n})$ is so chosen that in the definition of $\forceP_{\omega}$ the cases $(a)$ and $(b)$ are alternating. Thus whenever $n$ is even then $(\varphi_{m_n}, \varphi_{k_n})$ is such that case $(b)$ has to be applied and whenever $n$ is odd then for $(\varphi_{m_n}, \varphi_{k_n})$ case $(a)$ is the one which one should apply. Via changing the order of $(\varphi_{m_n}, \varphi_{k_n})$, this can always be achieved. Consequentially, at even stages $2n$ of the iteration, we define the new notion of $n+1$-legal, while forcing with the trivial forcing; on odd stages $2n+1$, we use the $n+1$-legal forcing to force that $\varphi_{m_{2n+1}}$ and $\varphi_{k_{2n+1}}$ have non-empty intersection and then form the real $R_{2n+1 -n} =R_{n+1}$ and then  code the quadruple $(R_{n+1},1,m_{2n},k_{2n})$ into $\vec{S^1}$.

A consequence of the last lemma (and our assumption on the form of $\forceP_{\omega}$) is that for every even natural number $2n$, there is a final stage in $\forceP_{\omega}$ where we create new codes of the form $(R,1,m_{n},k_{n})$. Indeed, by the definition of $l_{n+1}$-legal, no codes of the form $(R,1,m_{n},k_{n})$ are added by $l_{n+1}$-legal forcings and we have that
\begin{align*}
R_{n}:=  \{ R \, : \,  (R, 1,m_{2n},k_{2n}) \text{ is coded into } \vec{S^1} \}.
\end{align*}
To introduce a useful notion, we say that a real $x$, which is coded into $\vec{S^1}$ or $\vec{S^2}$, has coding area almost disjoint from the real $R$, if $R$ codes an $\omega_1$-sized subset of $\omega$ and the $\omega$-blocks, where $x$ is coded are almost disjoint from the set of ordinals coded by $R$, in that their intersection is countable.

\begin{lemma}
In $W[\forceP_{\omega}]$ for every $n \in \omega \backslash 0$, there is only one real $R$, namely $R_{n}$ which has a code of the form $(R_n,1,m_{2n},k_{2n})$ written into $\omega_1$-many  $\omega$-blocks of elements of $\vec{S^1}$ almost disjoint from $R_{n-1}$.
\end{lemma}
\begin{proof}
This is just a straightforward consequence of the definition of the iteration and of our assumption on the form of $\forceP_{\omega}$.  For $n=1$, note that $R_0$ is the unique ordinal for which $(R,1,m_0,k_0)$ is coded into $\vec{S^1}$. Then the next forcing $\forceP(2)$ is trivial while 2-legal is defined and the forcing $\forceP(3)$ first forces $\varphi_{m_3}$ and $\varphi_{k_3}$ to intersect without creating any code of the form $(R,1,m_0,k_0)$ or $(R,1,m_2,k_2)$, forms $R_1$ and writes $(R_1,1,m_2,k_2)$ into $\vec{S}^1$ with coding area almost disjoint from $R_0$. As all later factors of the iteration are 2-legal, there will be no new codes of the form $(R,1,m_2,k_2)$, hence $R_1$ is the unique real for which $(R_1,1,m_2,k_2)$ is written into $\vec{S^1}$ withwith coding area almost disjoint from $R_0$.

The argument for arbitrary $n$ works exactly the same way with the obvious replacements of letters.

\end{proof}
\begin{lemma}
In $W[\forceP_{\omega}]$, every real  $R_{n}$ is $\Sigma^1_3$-definable.
\end{lemma}
\begin{proof}
This is by induction on $n$. For $n=0$, $R_0$ is the unique real  for which $(R,1,m_0,k_0)$ is coded into $\vec{S^1}$.
This can be written in a $\Sigma^1_3$-way:
\begin{align*}
x= R_0 \Leftrightarrow \exists r \forall M &(|M|=\aleph_0 \land M \text{ is transitive } \land r \in M \land \omega_1^M=(\omega_1^L)^M \rightarrow \\&M \text{ sees with the help of the coded information in } r \text{ that }\\& x \text{ is the unique real such that } (x,1,m_0,k_0) \text{ is coded in} \\& \text{ a block of } \vec{S^1}).
\end{align*}

Now assume that there is a $\Sigma ^1_3$-formula which uniquely defines $R_{n}$, then, by the last lemma, $R_{n+1}$ is the unique real which has a code of the form $(R,1,m_{2(n+1)},k_{2(n+1)})$ written into $\aleph_1$-many $\omega$-blocks of elements of $\vec{S^1}$ almost disjoint from the set of ordinals coded by $R_n$. Let $\psi$ be the $\Sigma^1_3$-formula which defines $R_{n}$, then
\begin{align*}
x= \alpha_{n+1} \Leftrightarrow  &\psi(R_{n}) \text{ and } \\ &\exists r \forall M (|M|=\aleph_0 \land M \text{ is transitive } \land r \in M \land \omega_1^M=(\omega_1^L)^M \rightarrow \\&M \text{ sees with the help of the coded information in } r \text{ that }\\& x \text{ is the unique real such that } (x,1,m_{2(n+1)},k_{2(n+1)}) \\& \text{is coded in} \text{ $\aleph_1$-many blocks of } \vec{S^1} \\& \text{almost disjoint from the ordinals coded by }  R_{n}).
\end{align*}

\end{proof}
The ordinals $R_{n}$ indicate the set of places we need to exclude in order ro obtain correct codes of the form $(x,0,m_{2n},k_{2n})$ which are written into $\vec{S}$. Indeed the iteration $\forceP_{\omega}$, after the stage where we coded the quadruple $(R_{n},1,m_{2n},k_{2n})$ into $\vec{S^1}$ will be $2n-1$-legal, just by the definition of the iteration, which means that the tail of $\forceP_{\omega}$ will never produce a pathological situation.

Finally we can form our desired universe of the $\Sigma^1_3$-separation property. We let $E_{\omega} = \{(m_{2n},k_{2n}) \,: \, n \in \omega\}$ and force with $W[\forceP_{\omega}]$ as our ground model.  We use a countably supported iteration of length $\omega_1$ where we force every quadruple of the form $(x,0,m_{2n},k_{2n})$, $x \in 2^{\omega}$, into either $\vec{S^1}$ or $\vec{S^2}$ with coding area almost disjoint from $ R_{n}$, according to whether case 2, 3 or 4 is true in the definition of $n$-legal. Note that, by assumption, all pairs $(\varphi_{m_n}, \varphi_{k_n})$, $n$ odd, do have a non-empty intersection.  Let $W_1$ denote the universe we obtain this way.

As a consequence we can define in $W_1$ the desired separating sets as follows.
For a pair $(\varphi_{m_{2n}},\varphi_{k_{2n}})$ we let $\psi_{n}$ be the $\Sigma^1_3$-formula which defines $\alpha_{n}$. Now for any real $x$  we let 
\begin{align*}
x \in D_{m_{2n},k_{2n}} \Leftrightarrow \, &\psi(R_{n}) \text{ and } \\& \exists r \forall M (|M|=\aleph_0 \land M \text{ is transitive } \land r \in M \land \omega_1^M=(\omega_1^L)^M  \\&\rightarrow M \text{ sees with the help of the coded information in } r \text{ that }\\& (x,0,m_{2n},k_{2n})  \text{ is coded into $\aleph_1$-many blocks of } \vec{S^1} \\& \text{almost disjoint from the ordinals coded by }  R_{n}).
\end{align*}
and
\begin{align*}
x \notin D_{m_{2n},k_{2n}} \Leftrightarrow \, &\psi(R_{n}) \text{ and } \\& \exists r \forall M (|M|=\aleph_0 \land M \text{ is transitive } \land r \in M \land \omega_1^M=(\omega_1^L)^M  \\&\rightarrow M \text{ sees with the help of the coded information in } r \\& \text{that } (x,0,m_{2n},k_{2n})  \text{ is coded into $\aleph_1$-many blocks of } \vec{S^2} \\&\text{almost disjoint from the ordinals coded by }  R_{n}).
\end{align*}
Both formulas are $\Sigma^1_3$, hence the $\Sigma^1_3$-separation property holds in $W_1$.

\end{proof}

\section{Possible further applications and open problems}

The method which was used to prove the consistency of $\bf{\Sigma^1_{3}}$-separation can be applied to the generalized Baire space as well as we will sketch briefly. Let $BS(\omega_1)$ be defined as $\omega_1^{\omega_1}$ equipped with the usual product topology, i.e. basic open sets are of the form $O_{\sigma} :=\{ f \supset \sigma \, : \, f \in  \omega_1^{\omega_1}, \, \sigma \in \omega_1^{\omega} \}$. The projective hierarchy of $BS(\omega_1)$ is formed just as in the classical setting via projections and complements. The $\bf{\Sigma^1_{1}}$-sets are projections of closed sets, the $\bf{\Pi^1_{1}}$-sets are the complements of the $\bf{\Sigma^1_{1}}$-sets and so on.

The corresponding separation problem in $BS(\omega_1)$ is the following: does there exist a set generic extension of $L$ where $\bf{\Sigma^1_1}$-sets can be separated with $\bf{\Delta^1_1}$-sets?
Our above proof can be applied here as well. All we have to do is to lengthen our sequence of stationary sets we will use to code.

We start with $L$ as our ground model, fix our definable sequence of pairwise almost disjoint, $L$-stationary subsets of $\omega_1$, $(R_{\alpha} \, : \, \alpha < \omega_2)$.  We again split $\vec{R}$ into $\vec{R}^1$ and $\vec{R}^2$, add $\omega_2$-many Suslin trees $\vec{S}$ generically and use $\vec{R}$ to code up $\vec{S}$ just as we did it in the construction of the universe $W$, but leave out the almost disjoint coding forcings as we quantify over $H(\omega_2)$ anyway.
Next we list the $\bf{\Sigma^1_1}$-formulas $\varphi_n$ and start an $\omega_2$-length iteration where we add branches of members of the definable $(S_{\alpha} \, : \, \alpha < \omega_2)$ whenever our bookkeeping function $F$ hands us a triple $(x,m,k)$, just as in the situation of the usual Baire space. As there we distinguish the several cases and restrict ourselves to \emph{legal} forcings, where legal is the straightforward adjustment of legal in the $\omega$-case. The separating sets $D_{m,k}$ are defined using $\aleph_1$-sized, transitive models as which witness the wanted patterns on $\vec{S}^1$ and $\vec{S}^2.$

The sequence of the fixed $W$-Suslin trees $(S_{\alpha} \, : \, \alpha < \omega_1)$ is $\Sigma_1 (\omega_1)$-definable, thus the codes we write into them are $\Sigma_1(\omega_1)$-definable as well. We do not have to add almost disjoint coding forcings, as we quantify over subsets of $\omega_1$ in this setting anyway. All the factors will have the ccc, thus an iteration of length $\omega_2$ is sufficient to argue just as above that in the resulting generic extension $L[G]$, every pair of $\bf{\Sigma^1_1}$-sets is separated by the according $D_{m,k}$.

The just sketched method is not limited to the case $\omega_1$. Indeed, if $\kappa$ is a successor cardinal,in $L$ then we can lift the argument to $\kappa$ as well. The proof will rely on a different kind of preservation result for iterated forcing constructions, as we can not use Shelah's theory of iterations of $S$-proper forcings anymore. Also, the choice of the definable sequence of $L$-stationary subsets of $\kappa$ has to be altered slightly, as we can not shoot clubs in a nice way through arbitrary stationary subsets of $\kappa$. How to solve some of the just posed problems is worked out in \cite{SyLL}.

What remains an interesting open problem is the following:
\begin{question}
Can one force the $\Sigma^1_{1}$-separation property for $BS(\kappa)$ where $\kappa$ is inaccessible? What if $\kappa$ is weakly compact?
\end{question}

Another possible further direction is, as mentioned already in the introduction, to replace the ground model $L$ with inner models with large cardinals.
We expect that a modification of the ideas of this article can be lifted in that context. In particular we expect that, given any natural number $n \ge 1$, over the canonical inner model with $n$-many Woodin cardinals $M_n$ one can force a model for which the $\Sigma^1_{n+3}$-separation property is true.
Note here, that this would produce universes where the $\Sigma^1_{2n}$-separation property is true for the first time.
These considerations rely on large cardinals however and it is interesting whether one can get by without them.

\begin{question}
For $n \ge 4$, can one force the  $\Sigma^1_n$-separation property over $L$?
\end{question}
Last, note that the technique presented in this article seems to only work locally for one fixed $\Sigma^1_n$-pointclass. It would be very interesting to produce a model where we force a global behaviour of the $\Sigma^1_n$-separation property.

\begin{question}
For $n,m \in \omega$, is it possible to force the $\Sigma^1_n$ and the $\Sigma^1_m$-separation property simultaneously?
If $E \subset \omega$, is it possible to force a universe in which the $\Sigma^1_n$-property is true for every $n \in E$?
\end{question}
Last, it is tempting to analyse how much consequences of $\Delta^1_2$-determinacy can be forced to hold simultaneously.
A first test question in that direction would be
\begin{question}
Is it possible to force over $L$ the existence of a universe in which the $\Sigma^1_3$-separation property holds and every $\Sigma^1_3$-set has the Baire property? 
\end{question}


\begin{thebibliography}{9}

\bibitem{Abraham}
U. Abraham \textit{ Proper Forcing}, Handbook of Set Theory Vol.1. Springer

\bibitem{David}
R. David \textit{A very absolute $\Pi^1_2$-real singleton}. Annals of Mathematical Logic 23, pp. 101-120, 1982.

\bibitem{SyVera} 
V. Fischer and S.D. Friedman 
\textit{Cardinal characteristics and projective wellorders}. 
Annals of Pure and Applied Logic 161, pp. 916-922, 2010.

\bibitem{SyLL} S.D. Friedman, L. Wu and L. Zdomskyy \textit{$\Delta_1$-Definability of the Nonstationary Ideal at Successor Cardinals} Fundamenta Mathematica 229 (3), pp. 231-254, 2015.

\bibitem{FS}
S. D. Friedman and D. Schrittesser \textit{Projective Measure without Projective Baire}. Memoirs of the AMS 1298, 2020.

\bibitem{Goldstern} M. Goldstern \textit{Tools for your Forcing Construction.} In: Haim Judah, editor, \textit{Set Theory of the Reals, Israel Mathematical Conference Proceedings}, pp. 305-360, 1992.



\bibitem{Ho} S. Hoffelner \textit{$\hbox{NS}_{\omega_1}$ saturated and $\Delta_1$-definable.} Accepted at The Journal of Symbolic Logic.

\bibitem{Jech}
T. Jech \textit{Set Theory. Third Millenium Edition.} Springer 2006.

\bibitem{Kanovei1}
V. Kanovei and V. Lyubetsky \textit{Models of set theory in which separation theorem fails}, Arxiv.

\bibitem{Kanovei2}
V. Kanovei and V. Lyubetsky \textit{On Harrington’s model in which Separation holds but Reduction fails at the 3rd projective level, and on some related models of Sami}, Arxiv.
 
\bibitem{Kechris} 
A. Kechris 
\textit{Classical Descriptive Set Theory}.  
Springer 1995.
\bibitem{Delfino}
A.S. Kechris, D.A. Martin, J.R. Steel (eds.), \textit{Appendix: Victoria Delfino problems II}, in: Cabal Seminar
81–85, Lecture Notes in Math. vol. 1333, Springer, Berlin, 1988, pp. 221–224.

\bibitem{Levy} 
A. L\'evy
\textit{Definability in Axiomatic Set Theory.} In Logic, methodology and philosophy of science. Proceedings of the 1964 International Congress, Y. Bar-Hillel, ed., North-Holland, Amsterdam, 1965, pp.127-151.

\bibitem{Mathias}
A.D.R. Mathias \textit{Surrealist Landscape with Firgures.}  Periodica Mathematica Hungarica Vol. I0 (2-3), (1979), pp. 109--175

\bibitem{Miyamoto2} T. Miyamoto {\em $\omega_1$-Suslin trees under countable support iterations.} Fundamenta Mathematicae,
vol. 143 (1993), pp. 257–261.

\bibitem{Moschovakis}
Y. Moschovakis \textit{Descriptive Set Theory.} Mathematical Surveys and Monographs 155, AMS.


\bibitem{MSW}
 S.  M\"uller, R. D. Schindler and W. H. Woodin \textit{Mice with finitely many Woodin cardinals from optimal determinacy hypotheses}, Journal of Mathematical Logic, vol. 20 (2020).
 
 \bibitem{Neeman}
 I. Neeman \textit{Optimal Proofs of Determinacy}, The Bulletin of Symbolic Logic, Sep. 1995, Vol. 1, No. 3 (Sep., 1995), pp. 327-339.

\bibitem{Steel}
J. Steel \textit{The Core Model Iterability Problem}, Lecture Notes in Logic, Springer, Berlin, 1996.



\end{thebibliography}
\end{document}